\documentclass[a4paper,12pt]{article}
\usepackage{amscd,amsthm,amsfonts,amscd,epsf,amsmath,amssymb,latexsym,multicol}
\usepackage[latin1]{inputenc}
\usepackage{pstricks,pst-node,cite,epsfig}
\usepackage{graphicx}
\usepackage{xcolor}
\usepackage{mathrsfs}
\usepackage{textcomp}
\usepackage{setspace}
\usepackage{array}
\usepackage{float}
\usepackage{fancyhdr}
\usepackage{url}
\usepackage{pgf,tikz}
\usetikzlibrary{arrows,automata,positioning,decorations.pathreplacing,decorations.markings}

\usepackage{cite,tabularx,
caption,fancyhdr,geometry,color,indentfirst}

\begin{document}

\theoremstyle{definition}
\newtheorem{dfn}{Definition}[section]
\newtheorem{thm}{Theorem}
\newtheorem{lem}[thm]{Lemma}
\newtheorem {pro}[thm]{Proposition}
\newtheorem{cor}[thm]{Corollary}

\newtheorem{rmk}{Remark}

\title{\textbf{Normal category of partitions of a set}}
\author{A R Rajan \footnote{The author acknowledges the financial support of the Kerala State Council for Science, Technology and Environment, Trivandrum (via the award of Emeritus scientist) in the preparation of this article. } \\ \small Department Of Mathematics, University Of Kerala, \\ \small Trivandrum, Kerala-695581, India. \\ \small \emph{arrunivker@yahoo.com } \and 
P A Azeef Muhammed \footnote{The author wishes to acknowledge the financial support of the Council for Scientific and Industrial Research, New Delhi (via JRF and SRF) in the preparation of this article.}\\ \small Department Of Mathematics, University Of Kerala, \\ \small Trivandrum, Kerala-695581, India. \\ \small \emph{azeefp@gmail.com} }

\date{April 2015 }
\maketitle

\begin{abstract}
Let $T_X$ be the semigroup of all non-invertible transformations on an arbitrary set $X$. It is known that $T_X$ is a regular semigroup. The principal right(left) ideals of a regular semigroup $S$ with partial left(right) translations as morphisms form a normal category $\mathcal{R}( S )$($\mathcal{L}( S )$). Here we consider the category $\Pi(X)$ of partitions of a set $X$ and show that it admits a normal category structure and that $\Pi(X)$ is isomorphic to the category $\mathcal{R}( T_X )$. We also consider the normal dual $N^\ast \mathscr{P}(X)$ of the power-set category $\mathscr{P}(X)$ associated with $X$ and show that $N^\ast \mathscr{P}(X)$ is isomorphic to the partition category - $\Pi(X)$ of the set $X$.\\[.2cm]
Keywords : Normal Category, Transformation semigroup, Partition, Normal dual, Cross-connections.\\
AMS 2010 Mathematics Subject Classification : Primary- 20M20, Secondary- 20M17, 20M50.
\end{abstract}
The concept of normal categories has been introduced by K S S Nambooripad (cf. \cite{cross}) in the context of describing cross-connections for regular semigroups. Cross-connection is a method for constructing regular semigroups from the categories of principal left ideals and principal right ideals. 
\section{Preliminaries}
We assume familiarity with the definitions and elementary concepts of category theory (cf. \cite{mac}). In the following, the definitions and results on normal categories are as in \cite{cross}. For a category $\mathcal{C}$, we denote by $v\mathcal{C}$ the set of objects of $\mathcal{C}$. If $f:a \to b$ and $g:b \to c$ are morphisms in $\mathcal{C}$, then the composition of $f$ and $g$ gives $f\circ g : a \to c$ in $\mathcal{C}$.
\begin{dfn}
Let $\mathcal{C}$ and $\mathcal{D}$ be two categories and $F:\mathcal{C} \to \mathcal{D}$ be a functor. We denote by $vF$ the induced map from $v\mathcal{C}$ to $v\mathcal{D}$. We shall say that a functor $F$ is \emph{v-injective} if $vF$ is injective. $F$ is said to be \emph{v-surjective} if $vF$ is surjective. $F$ is said to be an isomorphism if it is $v$-injective, $v$-surjective, full and faithful. 
\end{dfn}
\begin{dfn}
A \emph{preorder} $\mathcal{P}$ is a category such that for any $ p , p' \in v\mathcal{P} $, the hom-set $\mathcal{P}(p,p')$ contains atmost one morphism.
\end{dfn}
In this case the relation $\subseteq$ on the class $v\mathcal{P}$ of objects of $\mathcal{P}$ defined by
$$ p\subseteq p'\iff \mathcal{P}(p,p')\ne\emptyset $$ is a quasi-order. $\mathcal{P}$ is said to be a strict preorder if $\subseteq$ is a partial order.
\begin{dfn}
Let $\mathcal{C}$ be a category and $\mathcal{P}$ be a subcategory of $\mathcal{C}$. Then $(\mathcal{C} ,\mathcal{P})$ is called a \emph{category with subobjects} if the following hold:
\begin{enumerate}
\item $\mathcal{P}$ is a strict preorder with $v\mathcal{P} = v\mathcal{C}$.
\item Every $f\in \mathcal{P}$ is a monomorphism in $\mathcal{C}$.
\item If $f,g\in \mathcal{P}$ and if $f=hg$ for some $h \in \mathcal{C}$, then
  $h\in \mathcal{P}$.
\end{enumerate}
In a category with subobjects, if $f: c \to d $ is a morphism in $ \mathcal{P}$, then $f$ is said to be an \emph{inclusion}. And we denote this inclusion by $j(c,d)$.
\end{dfn}
In the following, $(\mathcal{C} ,\mathcal{P})$ is a category with subobjects.
\begin{dfn}
A morphism $e: d \to c$ in $\mathcal{C}$ is called a
\emph{retraction} if $c \subseteq d$ and $j(c,d) e = 1_{c}$.
\end{dfn}
\begin{dfn}
A \emph{normal factorization} of a morphism $f \in \mathcal{C}(c,d)$ is a
factorization of the form $f=euj$ where $e:c\to c'$ is a retraction,
$u:c'\to d'$ is an isomorphism and $j=j(d',d)$ for some $c',d' \in v\mathcal{C}$ with $ c' \subseteq c$, $ d' \subseteq d$. 
\end{dfn}
It may be noted here that normal factorization of a morphism is not unique. But if $f = euj = e'u'j'$ are two normal factorizations of f, then it can be shown that $ eu = e'u' $ and $j = j'$. And here we denote $eu$ by $f^\circ$. Observe that $f^\circ$ is independent of the factorization and that $f^\circ$ is an epimorphism. We call $f^\circ$ the epimorphic part of $f$.
\begin{dfn}\label{dfn1}
 Let $\mathcal{C}$ be a category and $d\in v\mathcal{C}$. A map $\gamma:v\mathcal{C}\to\mathcal{C}$ is called a \emph{cone from the base $v\mathcal{C}$ to the vertex $d$} (or simply a cone in $\mathcal{C}$ to $d$) if $\gamma$ satisfies
  the following:
  \begin{enumerate}
  \item $\gamma(c)\in \mathcal{C}(c,d)$ for all $c\in v\mathcal{C}$.
  \item If $c'\subseteq c$ then $j(c',c)\gamma(c) = \gamma(c')$.
  \end{enumerate}
\end{dfn}
  Given the cone $\gamma$ we denote by $c_{\gamma}$ the the \emph{vertex} of
  $\gamma$ and for each $c\in v\mathcal{C}$, the morphism $\gamma(c): c \to c_{\gamma}$
  is called the \emph{component} of $\gamma$ at $c$. We define $ M_\gamma = \{ c \in \mathcal{C}\: :\: \gamma(c)\text{ is an isomorphism} \}$.
\begin{dfn}\label{dfn2}
A cone $\gamma$ is said to
  be \emph{normal} if there exists $c\in v\mathcal{C}$ such that \\
  $\gamma(c):c\to c_{\gamma}$ is an isomorphism.
\end{dfn}
\begin{dfn}
A \emph{normal category} is a pair $(\mathcal{C}, \mathcal{P})$ satisfying the following :
\begin{enumerate}
\item $(\mathcal{C}, \mathcal{P})$ is a category with subobjects.
\item Any morphism in $\mathcal{C}$ has a normal factorization. 
\item For each $c \in v\mathcal{C} $ there is a normal cone $\sigma$ with vertex $c$ and $\sigma (c) = 1_c$.
\end{enumerate}
\end{dfn}
 Now we see that the normal cones in a normal category form a regular semigroup (cf. \cite{cross}). Let $\sigma$ be a normal cone with vertex $d$ and let $f : d\to d'$ be an epimorphism.
 Then $\sigma * f$ defined below is a normal cone.
 \begin{equation}
 (\sigma * f)(a) = \sigma(a)f
 \end{equation}
  \begin{thm}(cf. \cite{cross} )
 Let $(\mathcal{C}, \mathcal{P})$ be a normal category and let $T\mathcal{C}$ be the set of all normal cones in $\mathcal{C}$. Then $T\mathcal{C}$ is a regular semigroup with product defined as follows :\\
 For $\gamma, \sigma \in T\mathcal{C}$.
\begin{equation} \label{eqnsg}
(\gamma * \sigma)(a) = \gamma(a) (\sigma(c_\gamma))^\circ
\end{equation} 
 where $(\sigma(c_\gamma))^\circ$ is the epimorphic part of the $\sigma(c_\gamma)$.\\
 Then it can be seen that $\gamma * \sigma$ is a normal cone. $T\mathcal{C}$ is called the \emph{semigroup of normal cones} in $\mathcal{C}$.
 \end{thm}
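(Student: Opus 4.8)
The plan is to prove the three assertions of the statement in turn: that $\gamma*\sigma$ is always a normal cone, that the operation \eqref{eqnsg} is associative, and that the resulting semigroup is regular. Throughout I use the fact recalled just before \eqref{eqnsg}: if $\sigma$ is a normal cone with vertex $d$ and $f\colon d\to d'$ is an epimorphism, then $\sigma*f$, defined by $(\sigma*f)(a)=\sigma(a)f$, is a normal cone with vertex $d'$. For well-definedness, put $h=(\sigma(c_\gamma))^\circ$; since the epimorphic part of a morphism is an epimorphism with the same domain, $h$ is an epimorphism out of the vertex $c_\gamma$ of $\gamma$, and $(\gamma*\sigma)(a)=\gamma(a)h=(\gamma*h)(a)$ for all $a$, so $\gamma*\sigma=\gamma*h$ is a normal cone by the quoted fact, its vertex being the codomain of $h$ (a subobject of $c_\sigma$). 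Hence \eqref{eqnsg} is a binary operation on $T\mathcal{C}$.

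The substance is associativity, and the computation reduces to two standard properties of the epimorphic part $(\cdot)^\circ$, both consequences of the uniqueness of the $eu$-part of a normal factorization (\cite{cross}): (A) if $e$ is an epimorphism and $ef$ is defined, then $(ef)^\circ=e\,f^\circ$; and (B) if $j$ is an inclusion and $fj$ is defined, then $(fj)^\circ=f^\circ$. Fix $\gamma,\sigma,\delta\in T\mathcal{C}$, write $g=\sigma(c_\gamma)$ and $h=\delta(c_\sigma)$, and let $p\subseteq c_\sigma$ be the codomain of $g^\circ$ and $r\subseteq c_\delta$ that of $h^\circ$. Expanding \eqref{eqnsg} twice, and using $c_{\gamma*\sigma}=p$, one obtains
\begin{equation*}
\big((\gamma*\sigma)*\delta\big)(a)=\gamma(a)\,g^\circ\,(\delta(p))^\circ,\qquad\big(\gamma*(\sigma*\delta)\big)(a)=\gamma(a)\,\big(g\,h^\circ\big)^\circ.
\end{equation*}
Since $g=g^\circ\,j(p,c_\sigma)$ we get $g\,h^\circ=g^\circ\,j(p,c_\sigma)\,h^\circ$, so (A) gives $\big(g\,h^\circ\big)^\circ=g^\circ\,\big(j(p,c_\sigma)\,h^\circ\big)^\circ$; and the cone equation for $\delta$ gives $\delta(p)=j(p,c_\sigma)\,h=j(p,c_\sigma)\,h^\circ\,j(r,c_\delta)$, so (B) gives $(\delta(p))^\circ=\big(j(p,c_\sigma)\,h^\circ\big)^\circ$. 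Substituting, the two right-hand sides coincide, which is associativity.

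For regularity, let $\gamma\in T\mathcal{C}$ have vertex $d$ and choose $c\in M_\gamma$ (possible since $\gamma$ is normal), so that $\gamma(c)\colon c\to d$ is an isomorphism; recall that an isomorphism is its own epimorphic part. By condition~(3) in the definition of a normal category pick a normal cone $\iota$ with vertex $d$ and $\iota(d)=1_d$, and set $\gamma'=\iota*(\gamma(c))^{-1}$; since $(\gamma(c))^{-1}$ is an isomorphism, hence an epimorphism, out of $d=c_\iota$, the quoted fact makes $\gamma'$ a normal cone with vertex $c$. Then $\gamma'(d)=\iota(d)(\gamma(c))^{-1}=(\gamma(c))^{-1}$, so by \eqref{eqnsg} $(\gamma*\gamma')(a)=\gamma(a)(\gamma'(d))^\circ=\gamma(a)(\gamma(c))^{-1}$, a cone with vertex $c$; applying \eqref{eqnsg} again, $\big((\gamma*\gamma')*\gamma\big)(a)=\gamma(a)(\gamma(c))^{-1}(\gamma(c))^\circ=\gamma(a)(\gamma(c))^{-1}\gamma(c)=\gamma(a)$. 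Thus $\gamma*\gamma'*\gamma=\gamma$, so $T\mathcal{C}$ is regular, and $\gamma'*\gamma*\gamma'$ is then a true inverse of $\gamma$.

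I expect the main obstacle to be the associativity step — not properties (A) and (B) themselves, which are routine once one has the normal-factorization machinery, but the careful bookkeeping of the vertices and codomains of the intermediate cones needed to bring \eqref{eqnsg} into the form displayed above. A fully self-contained account would in addition require proofs of the quoted normality of $\sigma*f$ and of the uniqueness of the $eu$-part, both of which are available in \cite{cross}.
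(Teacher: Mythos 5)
Your proposal is correct, but note that the paper itself offers no proof of this theorem: it is quoted verbatim from Nambooripad's \emph{Theory of Cross-connections} (\cite{cross}), so there is no in-paper argument to compare against. Your reconstruction is essentially the standard one from that reference: closure and normality of $\gamma*\sigma$ via the quoted fact about $\sigma*f$ for an epimorphism $f$; associativity reduced to the two identities $(ef)^\circ=e\,f^\circ$ for $e$ an epimorphism and $(fj)^\circ=f^\circ$ for $j$ an inclusion, both of which follow from the uniqueness of the epimorphic part that the paper records just after the definition of normal factorization; and regularity via the cone $\gamma'=\iota*(\gamma(c))^{-1}$ built from a component $c\in M_\gamma$ and the identity cone at $c_\gamma$ guaranteed by axiom (3) of a normal category. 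The vertex bookkeeping in your associativity computation ($c_{\gamma*\sigma}=p$, the codomain of $g^\circ$, and the cone identity $\delta(p)=j(p,c_\sigma)\,\delta(c_\sigma)$) is exactly the delicate point, and you have handled it correctly.
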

 For each $\gamma \in T\mathcal{C}$, define $H({\gamma};-)$ on the objects and morphisms of $\mathcal{C}$ as follows. For each $c\in v\mathcal{C}$ and for each $g\in \mathcal{C}(c,d)$, define
  \begin{subequations} \label{eqnH}
    \begin{align}
      H({\gamma};{c})&= \{\gamma\ast f^\circ : f \in \mathcal{C}(c_{\gamma},c)\} \\
      H({\gamma};{g}):H({\gamma};{c})& \to H({\gamma};{d}) \text{ as } \gamma\ast f^\circ \mapsto \gamma\ast (fg)^\circ
    \end{align}
  \end{subequations}
\begin{pro}(cf. \cite{cross} ) \label{proH}
  For $\gamma, \gamma' \in T\mathcal{C}$ , $H(\gamma;-) = H(\gamma';-)$ if and only if there is a unique isomorphism $h: c_{\gamma'} \to c_\gamma$, such that $\gamma = \gamma' \ast h $. And $\gamma \mathscr{R} \gamma' \: \iff \: H(\gamma;-) = H(\gamma';-)$.
  \end{pro}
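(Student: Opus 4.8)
The plan is to establish both equivalences by reducing everything to the behaviour of the modification operation $\sigma\mapsto\sigma\ast g$, where $g$ is an epimorphism with domain $c_\sigma$. Two preliminary observations will carry most of the weight. First, the pointwise formula $(\sigma\ast g)(a)=\sigma(a)g$ shows immediately that the modification is associative in the sense that $(\sigma\ast g)\ast g'=\sigma\ast(gg')$ whenever $gg'$ is defined, and that the semigroup product of \eqref{eqnsg} is the special case $\gamma\ast\sigma=\gamma\ast(\sigma(c_\gamma))^\circ$. Second, for an isomorphism $h$ whose codomain is the domain of a morphism $f$ we have $(hf)^\circ=h\,f^\circ$: writing $f=f^\circ j$ with $j$ the inclusion part of $f$, the morphism $hf^\circ$ is still an epimorphism (composite of the isomorphism $h$ with the epimorphism $f^\circ$), hence coincides with its own epimorphic part, so $hf=(hf^\circ)j$ is a normal factorization of $hf$ and thus $(hf)^\circ=hf^\circ$. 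I will also use the trivial fact that $\gamma\in H(\gamma;c_\gamma)$, since $\gamma=\gamma\ast(1_{c_\gamma})^\circ$.

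For the first equivalence, the implication $(\Leftarrow)$ is a computation: given an isomorphism $h:c_{\gamma'}\to c_\gamma$ with $\gamma=\gamma'\ast h$, for each object $c$ and each $f\in\mathcal{C}(c_\gamma,c)$ the two preliminary facts give $\gamma\ast f^\circ=(\gamma'\ast h)\ast f^\circ=\gamma'\ast(hf^\circ)=\gamma'\ast(hf)^\circ$; since $f\mapsto hf$ is a bijection of $\mathcal{C}(c_\gamma,c)$ onto $\mathcal{C}(c_{\gamma'},c)$, this yields $H(\gamma;c)=H(\gamma';c)$, and running the same computation with $fg$ in place of $f$ shows that the maps $H(\gamma;g)$ and $H(\gamma';g)$ agree. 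For $(\Rightarrow)$, from $\gamma\in H(\gamma;c_\gamma)=H(\gamma';c_\gamma)$ we get $\gamma=\gamma'\ast f^\circ$ for some $f\in\mathcal{C}(c_{\gamma'},c_\gamma)$, and symmetrically $\gamma'=\gamma\ast e^\circ$ for some $e\in\mathcal{C}(c_\gamma,c_{\gamma'})$; then $\gamma=\gamma\ast(e^\circ f^\circ)$ by associativity, and evaluating both sides at a component $c_0\in M_\gamma$ (which exists because $\gamma$ is normal) and cancelling the isomorphism $\gamma(c_0)$ gives $e^\circ f^\circ=1_{c_\gamma}$, and symmetrically $f^\circ e^\circ=1_{c_{\gamma'}}$, so $h:=f^\circ$ is the desired isomorphism. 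Uniqueness is the same cancellation: if $\gamma'\ast h=\gamma'\ast h'$, evaluate at $c_0'\in M_{\gamma'}$ and cancel $\gamma'(c_0')$.

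For the second equivalence I will move between the modification and the product of \eqref{eqnsg}. If $H(\gamma;-)=H(\gamma';-)$, then by the first part $\gamma=\gamma'\ast h$ with $h$ an isomorphism; by the third normal-category axiom there is a normal cone $\rho$ with $c_\rho=c_{\gamma'}$ and $\rho(c_{\gamma'})=1_{c_{\gamma'}}$, and then $\sigma:=\rho\ast h$ is a normal cone with $\sigma(c_{\gamma'})=h$, so $\gamma'\ast\sigma=\gamma'\ast(\sigma(c_{\gamma'}))^\circ=\gamma'\ast h=\gamma$; thus $\gamma\in\gamma'\,T\mathcal{C}$, and symmetrically (using $\gamma'=\gamma\ast h^{-1}$) $\gamma'\in\gamma\,T\mathcal{C}$, so $\gamma\,\mathscr{R}\,\gamma'$. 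Conversely, if $\gamma\,\mathscr{R}\,\gamma'$ then either $\gamma=\gamma'$, and there is nothing to prove, or $\gamma=\gamma'\ast\sigma$ and $\gamma'=\gamma\ast\tau$ for some $\sigma,\tau\in T\mathcal{C}$; setting $p=(\sigma(c_{\gamma'}))^\circ$ and $q=(\tau(c_\gamma))^\circ$ we obtain epimorphisms $p:c_{\gamma'}\to c_\gamma$ and $q:c_\gamma\to c_{\gamma'}$ with $\gamma=\gamma'\ast p$ and $\gamma'=\gamma\ast q$, and the cancellation argument above applied to $\gamma=\gamma\ast(qp)$ and $\gamma'=\gamma'\ast(pq)$ shows $qp=1_{c_\gamma}$ and $pq=1_{c_{\gamma'}}$, so $p$ is an isomorphism and the first part gives $H(\gamma;-)=H(\gamma';-)$.

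The step I expect to be the main obstacle is the second preliminary fact, $(hf)^\circ=h\,f^\circ$ for an isomorphism $h$ — the compatibility of the epimorphic-part operation with precomposition by isomorphisms — together with the bookkeeping of passing back and forth between $H(\gamma;-)$, the modification $\gamma\ast f^\circ$, and the semigroup product of \eqref{eqnsg}. Once these identifications are made precise, the remaining work is entirely the cancellation of isomorphisms inside hom-sets, relying throughout on the fact that every normal cone $\gamma$ has $M_\gamma\neq\emptyset$.
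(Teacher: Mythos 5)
The paper does not prove this proposition at all --- it is imported verbatim from Nambooripad's monograph \cite{cross} --- so there is no internal proof to compare you with; I can only judge your argument on its own terms, and it is correct. It is essentially the standard argument: reduce both equivalences to the modification $\sigma\mapsto\sigma\ast g$, use $\gamma=\gamma\ast(1_{c_\gamma})^\circ\in H(\gamma;c_\gamma)$ to extract epimorphisms $f^\circ:c_{\gamma'}\to c_\gamma$ and $e^\circ:c_\gamma\to c_{\gamma'}$ in both directions, and cancel against the isomorphism $\gamma(c_0)$ for $c_0\in M_\gamma$ (nonempty by normality of the cone) to see that the two epimorphic parts are mutually inverse. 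The device for the second equivalence --- realizing the modification $\gamma'\ast h$ as the semigroup product (\ref{eqnsg}) via a cone $\sigma=\rho\ast h$ with $(\sigma(c_{\gamma'}))^\circ=h$, where $\rho$ is the cone with $\rho(c_{\gamma'})=1_{c_{\gamma'}}$ supplied by axiom (3) --- is also the right one.

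The one step you flag as delicate, $(hf)^\circ=h f^\circ$ for an isomorphism $h$, does need one more sub-lemma than you supply. Knowing that $hf^\circ$ is an epimorphism is not by itself enough to declare $hf=(hf^\circ)j$ a normal factorization, because a normal factorization requires the epimorphic part to be exhibited as a retraction followed by an isomorphism. What closes the gap is the fact that in a normal category every epimorphism equals its own epimorphic part. This follows because every inclusion splits: the normal cone $\sigma$ with vertex $c'$ and $\sigma(c')=1_{c'}$ guaranteed by axiom (3) has, at each $c$ with $c'\subseteq c$, a component that is a retraction for $j(c',c)$; hence an epic inclusion is an isomorphism and, by antisymmetry of $\subseteq$ and condition (3) in the definition of category with subobjects, an identity. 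Granting that, the epimorphism $hf^\circ$ admits a normal factorization with trivial inclusion part, so $hf=(hf^\circ)\,j(d',d)$ is a genuine normal factorization and $(hf)^\circ=hf^\circ$ follows from the uniqueness of epimorphic parts stated after the definition of normal factorization. With that sub-lemma inserted, your proof is complete.
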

 \begin{dfn}
 If $\mathcal{C}$ is a normal category, then the \emph{normal dual} of $\mathcal{C}$ , denoted by $N^\ast \mathcal{C}$ , is the full subcategory of $\mathcal{C}^\ast $ with vertex set\\
 \begin{equation} \label{eqnH1}
 v N^\ast \mathcal{C} = \{ H(\epsilon;-) \: : \: \epsilon \in E(T\mathcal{C}) \}
 \end{equation} 
 where $\mathcal{C}^\ast$ is the category of all functors from $\mathcal{C}$ to $\bf{Set}$(cf. \cite{mac} ). 
 \end{dfn}
 \begin{thm} (cf. \cite{cross} )\label{thm1}
 To every morphism $\sigma : H(\epsilon;-) \to H(\epsilon ';-)$ in $N^\ast \mathcal{C} $, there is a unique $\hat{\sigma} : c_{\epsilon '} \to c_\epsilon$ in $\mathcal{C}$ such that the component of the natural transformation $\sigma$ at $c \in v\mathcal{C} $ is the map given by :
 \begin{equation} \label{eqnH2}
 \sigma(c) : \epsilon \ast f^\circ \mapsto \epsilon ' \ast (\hat{\sigma} f)^\circ 
 \end{equation} 
 Moreover $\sigma$ is the inclusion $ H(\epsilon;-) \subseteq H(\epsilon ';-)$ if and only if
 $ \epsilon = \epsilon ' \ast (\hat{\sigma})^\circ $.
 \end{thm}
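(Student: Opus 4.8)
The plan is to run a Yoneda-type argument resting on the observation that each functor $H(\epsilon;-)$ is, up to natural isomorphism, the representable functor $\mathcal{C}(c_\epsilon,-)$. I would first record the auxiliary fact that for an idempotent normal cone $\epsilon\in E(T\mathcal{C})$ one has $\epsilon(c_\epsilon)=1_{c_\epsilon}$: writing $\epsilon\ast\epsilon=\epsilon$ by means of (\ref{eqnsg}) and specialising at $a=c_\epsilon$ shows that $\epsilon(c_\epsilon)$ is an idempotent whose normal factorization has trivial inclusion part, hence an idempotent epimorphism, hence $1_{c_\epsilon}$. Next, $\epsilon=\epsilon\ast(1_{c_\epsilon})^\circ$ lies in $H(\epsilon;c_\epsilon)$ and is a ``universal element'': for every $f\in\mathcal{C}(c_\epsilon,c)$ the definition of $H(\epsilon;f)$ in (\ref{eqnH}) gives $H(\epsilon;f)(\epsilon)=\epsilon\ast f^\circ$, and every element of $H(\epsilon;c)$ has this form. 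Finally, evaluating a cone $\epsilon'\ast g^\circ$ at $c_{\epsilon'}$ returns $g^\circ$ (using $\epsilon'(c_{\epsilon'})=1_{c_{\epsilon'}}$), and $g$ is recovered from $g^\circ$ as $g=g^\circ\, j(d',c_\epsilon)$ with $d'$ the codomain of $g^\circ$; hence $g\mapsto\epsilon'\ast g^\circ$ is a bijection $\mathcal{C}(c_{\epsilon'},c_\epsilon)\to H(\epsilon';c_\epsilon)$.

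Given a natural transformation $\sigma:H(\epsilon;-)\to H(\epsilon';-)$, I would then define $\hat{\sigma}$ to be the unique morphism in $\mathcal{C}(c_{\epsilon'},c_\epsilon)$ with $\epsilon'\ast\hat{\sigma}^\circ=\sigma(c_\epsilon)(\epsilon)$, which exists and is unique by the bijection just described. For arbitrary $c\in v\mathcal{C}$ and $f\in\mathcal{C}(c_\epsilon,c)$, the naturality square of $\sigma$ along $f$, evaluated at the universal element $\epsilon$, reads
\[
\sigma(c)\bigl(H(\epsilon;f)(\epsilon)\bigr)=H(\epsilon';f)\bigl(\sigma(c_\epsilon)(\epsilon)\bigr),
\]
i.e.\ $\sigma(c)(\epsilon\ast f^\circ)=H(\epsilon';f)(\epsilon'\ast\hat{\sigma}^\circ)=\epsilon'\ast(\hat{\sigma}f)^\circ$, which is precisely (\ref{eqnH2}); since every element of $H(\epsilon;c)$ is of the form $\epsilon\ast f^\circ$, this pins down every component of $\sigma$. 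Uniqueness of $\hat{\sigma}$ follows at once: any $\hat{\sigma}'$ satisfying (\ref{eqnH2}) yields, at $c=c_\epsilon$ and $f=1_{c_\epsilon}$, $\epsilon'\ast(\hat{\sigma}')^\circ=\sigma(c_\epsilon)(\epsilon)=\epsilon'\ast\hat{\sigma}^\circ$, whence $\hat{\sigma}'=\hat{\sigma}$ by injectivity.

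For the ``moreover'' clause, the inclusion $H(\epsilon;-)\subseteq H(\epsilon';-)$ in $N^\ast\mathcal{C}$ is the natural transformation all of whose components are the set inclusions, so $\sigma$ is that inclusion if and only if $\sigma(c)(\epsilon\ast f^\circ)=\epsilon\ast f^\circ$ for all $c$ and $f$. The forward implication is the special case $c=c_\epsilon$, $f=1_{c_\epsilon}$, which gives $\epsilon'\ast\hat{\sigma}^\circ=\sigma(c_\epsilon)(\epsilon)=\epsilon$. For the converse, assume $\epsilon=\epsilon'\ast\hat{\sigma}^\circ$; comparing vertices forces the codomain of $\hat{\sigma}^\circ$ to be $c_\epsilon$, so $\hat{\sigma}=\hat{\sigma}^\circ$ is an epimorphism and $\epsilon=\epsilon'\ast\hat{\sigma}$. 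Using the routinely verified identity $(\rho\ast p)\ast q=\rho\ast(pq)$ for composable epimorphisms and the absorption identity $(gj)^\circ=g^\circ$ for an inclusion $j$, one then obtains $\epsilon\ast f^\circ=(\epsilon'\ast\hat{\sigma})\ast f^\circ=\epsilon'\ast(\hat{\sigma}f^\circ)$ while $\epsilon'\ast(\hat{\sigma}f)^\circ=\epsilon'\ast(\hat{\sigma}f^\circ)^\circ$, so the claim reduces to $(\hat{\sigma}f^\circ)^\circ=\hat{\sigma}f^\circ$; equivalently, one can compare the cones $\epsilon\ast f^\circ$ and $\epsilon'\ast(\hat{\sigma}f)^\circ$ directly, checking equal vertices and equal components, which again boils down to this last equality.

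The main obstacle I anticipate is this converse half of the ``moreover'' clause: converting the single equation $\epsilon=\epsilon'\ast\hat{\sigma}^\circ$ into the pointwise statement that $\sigma$ is an inclusion requires knowing that $\hat{\sigma}$ equals its own epimorphic part and that this propagates to $(\hat{\sigma}f)^\circ=\hat{\sigma}f^\circ$ for every $f$ composable after $\hat{\sigma}$. These are soft consequences of the normal-category axioms, but they call for careful bookkeeping with retractions, inclusions, and the essential uniqueness of normal factorizations. By contrast, the representability step and the Yoneda-style computation behind the first assertion are routine once $\epsilon(c_\epsilon)=1_{c_\epsilon}$ and the bijection $g\mapsto\epsilon'\ast g^\circ$ are in hand.
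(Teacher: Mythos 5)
The paper does not prove this theorem: it is quoted verbatim from Nambooripad's monograph (the \cite{cross} reference) and used as a black box, so there is no in-paper proof to compare yours against. Judged on its own terms, your argument is correct and is essentially the standard representability proof: the facts you isolate --- that an idempotent cone satisfies $\epsilon(c_\epsilon)=1_{c_\epsilon}$, and that $g\mapsto\epsilon'\ast g^\circ$ is a bijection from $\mathcal{C}(c_{\epsilon'},c_\epsilon)$ onto $H(\epsilon';c_\epsilon)$ --- do hold (for the first, note that equality of the vertices of $\epsilon\ast\epsilon$ and $\epsilon$ already forces the inclusion part of $\epsilon(c_\epsilon)$ to be trivial, after which idempotency plus right-cancellability of epimorphisms gives the identity; for the second, evaluate at $c_{\epsilon'}$ and recover $g$ from $g^\circ$ via the inclusion determined by its codomain). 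The Yoneda evaluation at the universal element then yields (\ref{eqnH2}) and uniqueness exactly as you say. The step you flag as the main obstacle also goes through: from $\epsilon=\epsilon'\ast\hat{\sigma}^\circ$, comparing vertices forces $\hat{\sigma}$ to be an epimorphism, and in a normal category an epimorphism equals its own epimorphic part (its inclusion part is a split epic monic inclusion, hence an identity by strictness of the preorder), which gives $(\hat{\sigma}f)^\circ=(\hat{\sigma}f^\circ)^\circ=\hat{\sigma}f^\circ$ and hence $\epsilon'\ast(\hat{\sigma}f)^\circ=\epsilon\ast f^\circ$ componentwise. So the proposal is a complete and correct proof sketch of the cited result; the only improvement would be to state explicitly, as a lemma, the fact that epimorphisms in a normal category coincide with their epimorphic parts, since everything in the converse direction hinges on it.
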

Now we describe the normal category $\mathcal{R}(S)$ of the principal right ideals of a regular semigroup $S$. Since every principal right ideal in $S$ has at least one idempotent generator, we may write objects
(vertexes) in $\mathcal{R}(S)$ as $eS$ for $e\in E(S)$. A morphism $\lambda:eS\to fS$ is a left translation $\lambda=\lambda(e,s,f)$ where $s \in fSe$ and $\lambda$ maps $x \mapsto sx$. Thus
\begin{equation} \label{eqnRS}
  v\mathcal{R}(S) = \{ eS : e \in E(S)\}\quad\text{and}  \quad 
  \mathcal{R}(S) =\{\lambda(e,s,f) : e,f \in E(S),\; s\in fSe\}.    
\end{equation}
The following proposition gives the general properties of $\mathcal{R}(S)$.
\begin{pro} (cf. \cite{cross} )\label{pro1} Let $S$ be a regular semigroup. The $\mathcal{R}(S)$ is a normal category such that $\lambda(e,u,f)=\lambda(e',v,f')$ if and only if $e \mathscr{R} e'$, $f\mathscr{R} f'$, $u \in fSe$, $v\in f'Se'$ and $u=ve$. If $\lambda(e,u,f)$ and $\lambda(e',v,f')$ are composable morphisms in $\mathcal{R}(S)$ (so that $f \mathscr{R}e'$) and $u \in fSe$ and $v\in f'Se'$, then  $\lambda(e,u,f)\lambda(e',v,f') = \lambda(e,vu,f')$. Given a morphism $\lambda(e,u,f)$ in $\mathcal{R}(S)$, for any $g\in L_{u} \cap \omega(f)$ and $h\in E(R_{u})$,
 $$ \lambda=\lambda(e,g,g)\lambda(g,u,h)\lambda(h,h,f) $$
is a normal factorization of $\lambda$ and every normal factorization of $\lambda$ has this form.
\end{pro}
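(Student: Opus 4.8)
The statement bundles five assertions, and I would dispatch them in this order: (a) $(\mathcal{R}(S),\mathcal{P})$ is a category with subobjects, $\mathcal{P}$ being the class of inclusions; (b) the criterion for two triples to name the same morphism; (c) the composition formula; (d) for each object a normal cone that is the identity at its vertex; (e) the description of normal factorizations. Then (a), together with (e) (``every morphism has a normal factorization'') and (d), gives ``normal category''. The uniform idea is to translate each categorical condition on a left translation $\lambda(e,u,f)\colon x\mapsto ux$ into an elementary identity in $S$, using $ex=x$ for $x\in eS$, $u=fu=ue$ for $u\in fSe$, and $e\mathscr{R}e'\iff ee'=e'$ and $e'e=e$, and then to verify that identity with the regularity of $S$ and the existence of idempotents in $R_a$ and $L_a$ for each $a\in S$.

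For (a) I would take $\mathcal{P}$ to be the morphisms $\lambda(e,e,f)$ with $eS\subseteq fS$ (equivalently $fe=e$); these are precisely the identity embeddings of right ideals, so $\mathcal{P}$ is a preorder, and it is strict because the objects of $\mathcal{R}(S)$ are genuine right ideals, whence $eS=fS$ is equality of objects. Every inclusion is a restriction of an identity map, hence monic in $\mathcal{R}(S)$; and if $f,g\in\mathcal{P}$ with $f=hg$ then reading this identity on images (or computing with the composition rule) forces $h$ to be an inclusion again, giving the last axiom. For (b): equality of $\lambda(e,u,f)$ and $\lambda(e',v,f')$ first forces equal domains and codomains, i.e.\ $e\mathscr{R}e'$ and $f\mathscr{R}f'$; granting that, the two maps agree iff $ux=vx$ for all $x\in eS$, which by evaluation at $x=e$ (and $ex=x$ for the converse) is $ue=ve$, that is $u=ve$ since $u=ue$. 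For (c): $\lambda(e,u,f)\lambda(e',v,f')$ sends $x\mapsto v(ux)=(vu)x$, and $vu=f'(vu)e\in f'Se$, so the composite is $\lambda(e,vu,f')$.

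For (d), for $a\in S$ I would fix $f\in E(R_a)$ and set $\rho^{a}(e'S)=\lambda(e',ae',f)\colon e'S\to fS=aS$; the cone condition reduces to $e'e''=e''$ whenever $e''S\subseteq e'S$, and $\rho^{a}$ is normal because $\rho^{a}(f'S)=\lambda(f',a,f)$ is an isomorphism for any $f'\in E(L_a)$, its inverse being $\lambda(f,a^{*},f')$ for an inverse $a^{*}$ of $a$ in $S$ with $aa^{*}=f$ and $a^{*}a=f'$. Specialising $a=e\in E(S)$ and $f=e$ yields a normal cone with vertex $eS$ and $\rho^{e}(eS)=\lambda(e,e,e)=1_{eS}$, establishing condition (3) of the definition of a normal category.

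The heart of the proof is (e). The image of $\lambda=\lambda(e,u,f)$ is $uS$; choosing $h\in E(R_u)$ gives $uS=hS\subseteq fS$, so the inclusion part is $j=\lambda(h,h,f)$ and the epimorphic part is $\lambda^{\circ}=\lambda(e,u,h)$. To split $\lambda^{\circ}$ I would pick an idempotent $g\in L_u\cap\omega(f)$ and claim that $\lambda(e,g,g)$ is a retraction and $\lambda(g,u,h)$ an isomorphism (with inverse $\lambda(h,u^{*},g)$ for an inverse $u^{*}$ of $u$ with $uu^{*}=h$, $u^{*}u=g$), whereupon the composition formula of (c) recombines the three factors into $\lambda(e,hug,f)=\lambda(e,u,f)$, since $hu=u$ and $ug=u$. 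For the converse, given any normal factorization $\lambda=e'u'j'$, the uniqueness of the inclusion part recorded in the remark following the definition of normal factorization forces $j'=j(uS,fS)$, hence $j'=\lambda(h,h,f)$ for some $h\in E(R_u)$, and then $e'u'=\lambda^{\circ}$; classifying the retractions out of $eS$ and the isomorphisms into $hS$ and matching against $\lambda^{\circ}=\lambda(e,u,h)$ then pins $e'$ and $u'$ into the stated form. The main obstacle is precisely this last step: verifying that $\lambda(e,g,g)$ really is a retraction and $\lambda(g,u,h)$ an isomorphism, that an idempotent $g\in L_u\cap\omega(f)$ exists at all --- which rests on the lemma that, under the one-sided conditions $u=fu=ue$, the $\mathscr{L}$-class of $u$ contains an idempotent lying beneath the relevant idempotent of $S$, obtained by replacing an arbitrary inverse of $u$ by a suitably chosen one --- and the bookkeeping with Green's relations needed to rule out any other shape of normal factorization.
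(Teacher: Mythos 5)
First, a point of calibration: the paper offers no proof of Proposition~\ref{pro1} at all --- it is quoted verbatim from Nambooripad \cite{cross} --- so your outline can only be measured against the standard argument there, not against anything in this article. Against that standard, your decomposition (a)--(e) and the verifications of (b), (c) and (d) are exactly right, including the construction of the principal cone $\rho^a$ and the device of replacing an arbitrary inverse $u'$ of $u$ by a better one to produce the idempotent needed in (e).

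There are, however, two places where the outline as written would not go through. The first is the condition on $g$: as printed (and as you copy it), $g\in L_u\cap\omega(f)$ cannot be correct, because $\lambda(e,g,g)$ is a retraction only if $gS\subseteq eS$, i.e.\ $eg=g$, and membership in $\omega(f)$ gives $fg=g$, not $eg=g$. The condition must be $g\in E(L_u)\cap\omega(e)$ (the evident dual of the $\mathcal{L}(S)$ statement in \cite{cross}); with that correction, existence is exactly your ``suitably chosen inverse'': if $u'\in V(u)$ then $eu'\in V(u)$ as well (using $ue=u$), and $g=eu'u$ lies in $E(L_u)\cap\omega(e)$. You hedge with ``the relevant idempotent'', but identifying it is precisely where the printed statement goes wrong, so you should commit. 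The second gap is the converse half of (e), which you dismiss as bookkeeping. It is short but it is the substance of the claim: a general normal factorization is $\lambda(e,w,g)\,\lambda(g,u',h)\,\lambda(h,h,f)$ with $w\in gSe$, $wg=g$, $u'\in R_h\cap L_g$ and $u=u'w$, and one must actually compute that $w^2=(wg)(we)=w$, that $gw=w$ and $wg=g$ so $w\mathscr{R}g$ and the retraction equals $\lambda(e,w,w)$, that $ew=we=w$ so $w\in\omega(e)$, and that $w=su'w=su\in Su$ together with $u=u'w\in Sw$ gives $w\mathscr{L}u$; only then does the factorization collapse to the stated form $\lambda(e,w,w)\lambda(w,u,h)\lambda(h,h,f)$ with $w\in E(L_u)\cap\omega(e)$ and $h\in E(R_u)$. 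Supplying these two items would turn your strategy into a complete proof.
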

\begin{pro}(cf. \cite{cross} ) \label{pro2}
Let $S$ be a regular semigroup, $a \in S$ and $f \in E(L_a) $. Then for each $e \in E(S) $, let $\rho^a(Se) = \rho(e,ea,f) $. Then $\rho^a$ is a normal cone called the principal cone generaed by $a$ in $\mathcal{L}(S)$ with vertex $Sa$ such that $M_{\rho^a} = \{ Se : e \in E(R_a) \}$. $\rho^a$ is an idempotent in $T\mathcal{L}(S)$ iff $a \in E(S)$.
\end{pro}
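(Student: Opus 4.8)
The plan is to carry out everything inside $\mathcal{L}(S)$, whose structure is the left--right dual of the description of $\mathcal{R}(S)$ in Proposition~\ref{pro1}: objects are $Se$ ($e\in E(S)$), a morphism $\rho(e,u,f)\colon Se\to Sf$ is a right translation $x\mapsto xu$ with $u\in eSf$, composition is $\rho(e,u,f)\rho(f,v,g)=\rho(e,uv,g)$, the inclusion $j(Se',Se)$ when $Se'\subseteq Se$ (equivalently $e'e=e'$) is $\rho(e',e',e)$, and $\rho(e,u,f)=\rho(e',v,f')$ iff $e\,\mathscr{L}\,e'$, $f\,\mathscr{L}\,f'$ and $u=ev$. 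I will use three elementary consequences of Green's relations: if $e\in E(R_{a})$ then $ea=a$; if $f\in E(L_{a})$ then $af=a$; and $\rho(e,u,g)$ is an isomorphism iff $u\,\mathscr{R}\,e$ and $u\,\mathscr{L}\,g$ (the forward direction because the inverse $u^{*}$ of $u$ with $uu^{*}=e$, $u^{*}u=g$ makes $\rho(g,u^{*},e)$ a two-sided inverse; the converse from the equality criterion).

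I would begin by showing $\rho^{a}$ is a well-defined cone with vertex $Sa$. From $af=a$ we get $ea=eaf\in eSf$, so $\rho(e,ea,f)\colon Se\to Sf$ is a genuine morphism, and $Sf=Sa$ since $f\,\mathscr{L}\,a$; if $Se=Se'$ then $e\,\mathscr{L}\,e'$, hence $ee'=e$, and the equality criterion gives $\rho(e,ea,f)=\rho(e',e'a,f)$, so $\rho^{a}$ is a well-defined map on $v\mathcal{L}(S)$ all of whose components land in $Sa$; thus $c_{\rho^{a}}=Sa$. For condition~(2) of Definition~\ref{dfn1}, if $Se'\subseteq Se$ then $e'e=e'$ and $j(Se',Se)=\rho(e',e',e)$, so $j(Se',Se)\,\rho^{a}(Se)=\rho(e',e',e)\rho(e,ea,f)=\rho(e',e'ea,f)=\rho(e',e'a,f)=\rho^{a}(Se')$.

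The heart of the proof is $M_{\rho^{a}}=\{Se:e\in E(R_{a})\}$. By the isomorphism criterion, $\rho^{a}(Se)=\rho(e,ea,f)$ is an isomorphism iff $ea\,\mathscr{R}\,e$ and $ea\,\mathscr{L}\,f$, and since $f\,\mathscr{L}\,a$ the latter reads $ea\,\mathscr{L}\,a$; I claim $ea\in R_{e}\cap L_{a}$ holds precisely when $R_{a}\cap L_{e}$ contains an idempotent, i.e. when $Se=Sg$ for some $g\in E(R_{a})$. For one direction, if $g\in E(R_{a}\cap L_{e})$, write $g=ar$ and $g=te$; using $ga=a$ and $eg=e$ we get $e=eg=(ea)r$ and $a=ga=t(ea)$, so $ea\in R_{e}\cap L_{a}$, and $Se=Sg$. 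For the other, put $u=ea\in R_{e}\cap L_{a}$ and take the inverse $u^{*}$ of $u$ with $uu^{*}=e$, $u^{*}u=f$; a short computation (using $a\in S^{1}u$ and $u^{*}uu^{*}=u^{*}$) gives $au^{*}a=a$, so $g:=au^{*}$ is idempotent with $a=ga\in gS^{1}$, hence $g\,\mathscr{R}\,a$; and since $\mathscr{L}$ is a right congruence, $a\,\mathscr{L}\,f$ yields $g=au^{*}\,\mathscr{L}\,fu^{*}=u^{*}$, while $u^{*}\,\mathscr{L}\,e$, so $g\,\mathscr{L}\,e$, i.e. $g\in E(R_{a}\cap L_{e})$ and $Se=Sg$. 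This proves the claim; in particular, $S$ being regular, $E(R_{a})\ne\emptyset$ (for $a'\in V(a)$, $aa'\in E(R_{a})$ and $\rho^{a}(S(aa'))=\rho(aa',a,f)$ is an isomorphism), so $\rho^{a}$ has an invertible component, is a normal cone, and lies in $T\mathcal{L}(S)$.

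For idempotency, the product formula gives $(\rho^{a}\ast\rho^{a})(Se)=\rho^{a}(Se)\,(\rho^{a}(Sa))^{\circ}$; evaluating at an $e\in E(R_{a})$, where $\rho^{a}(Se)$ is invertible, shows $\rho^{a}\ast\rho^{a}=\rho^{a}$ is equivalent to $(\rho^{a}(Sa))^{\circ}=1_{Sa}$, hence to $\rho^{a}(Sa)=1_{Sa}$, and conversely $\rho^{a}(Sa)=1_{Sa}$ makes $\rho^{a}$ idempotent. Representing the object $Sa$ by $f$ we have $\rho^{a}(Sa)=\rho(f,fa,f)$ and $1_{Sa}=\rho(f,f,f)$, so by the equality criterion $\rho^{a}$ is idempotent iff $fa=f$. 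If $a\in E(S)$ then $a,f$ are $\mathscr{L}$-related idempotents and $f=ca$ gives $fa=ca^{2}=ca=f$; conversely $af=a$ always, so $fa=f$ forces $a=af=a(fa)=(af)a=a^{2}$, i.e. $a\in E(S)$. The one step I expect to need real care is the forward half of the $M_{\rho^{a}}$ computation --- extracting a genuine idempotent of $R_{a}\cap L_{e}$ from an invertible component --- which is exactly where the sandwich inverse $u^{*}$ and the right-congruence property of $\mathscr{L}$ are essential; the rest is routine dualization of Proposition~\ref{pro1} together with Green's-relation bookkeeping.
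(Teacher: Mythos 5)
The paper does not actually prove this proposition: it is quoted verbatim from Nambooripad's \emph{Theory of Cross-connections} with a ``(cf.~\cite{cross})'', so there is no in-paper argument to compare yours against. Judged on its own, your proof is correct and self-contained. You dualize Proposition~\ref{pro1} to get the right description of $\mathcal{L}(S)$ (morphisms $\rho(e,u,f)$ with $u\in eSf$, equality criterion $u=ev$, composition $\rho(e,u,f)\rho(f,v,g)=\rho(e,uv,g)$), and all the subsequent computations check out: $ea=eaf\in eSf$ gives well-definedness, the cone identity $j(Se',Se)\rho^a(Se)=\rho(e',e'a,f)$ is immediate, and your characterization of invertible components is exactly the Miller--Clifford location theorem ($ea\in R_e\cap L_a$ iff $L_e\cap R_a$ contains an idempotent), which you in effect re-derive via the sandwich inverse $u^*$ with $uu^*=e$, $u^*u=f$ --- the computation $au^*a=a$, $g=au^*\in E(R_a\cap L_e)$ is right, and citing the location theorem outright would shorten this step. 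The idempotency criterion via cancelling an invertible component to reduce $\rho^a\ast\rho^a=\rho^a$ to $\rho^a(Sa)=1_{Sa}$, i.e.\ to $fa=f$, and then to $a=a^2$ using $af=a$, is clean and correct. The only places that deserve an explicit word are the standard facts you lean on without proof (existence of the inverse $u^*$ with prescribed idempotent products for $u\in R_e\cap L_f$, and the isomorphism criterion for $\rho(e,u,g)$), but your parenthetical justifications for both are adequate. In short: a valid proof where the paper offers only a citation.
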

\begin{pro}(cf. \cite{cross} )\label{pro3}
If S is a regular semigroup then the mapping $a \mapsto \rho^a$ is a homomorphism from S to $T\mathcal{L}(S)$. Further if S has an identity, then S is isomorphic to $T\mathcal{L}(S)$.
\end{pro}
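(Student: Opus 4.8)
The plan is to establish the two assertions separately. To show $a \mapsto \rho^a$ is a homomorphism I would compute $\rho^a \ast \rho^b$ directly from the product formula \eqref{eqnsg}, the description of the principal cone $\rho^a$ in Proposition \ref{pro2}, and the left--right dual of Proposition \ref{pro1}; the latter records how morphisms of $\mathcal{L}(S)$ look (a morphism $Se \to Sf$ is a right translation $\rho(e,u,f)$ with $u \in eSf$), how they compose ($\rho(e,u,f)\rho(f,v,g) = \rho(e,uv,g)$), and what their epimorphic parts are ($(\rho(e,u,f))^\circ = \rho(e,u,h)$ for any $h \in E(L_u)$, read off from the dual normal factorization). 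To prove the second assertion I would show that, once $S$ has an identity $1$, the homomorphism $a \mapsto \rho^a$ is a bijection; the key point is that $S = S1$ is the largest object of $\mathcal{L}(S)$, so a cone is both determined by and recoverable from its component at $S$.

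For the homomorphism, I would fix $a,b \in S$, an object $Se$ with $e \in E(S)$, and idempotents $f_a \in L_a$, $f_b \in L_b$. Since $c_{\rho^a} = Sa = Sf_a$, formula \eqref{eqnsg} gives $(\rho^a \ast \rho^b)(Se) = \rho^a(Se)(\rho^b(Sf_a))^\circ = \rho(e,ea,f_a)(\rho(f_a,f_a b,f_b))^\circ = \rho(e,ea,f_a)\rho(f_a,f_a b,h)$ for some $h \in E(L_{f_a b})$, and the composition rule collapses this to $\rho(e,eaf_a b,h)$. Because $f_a$ is an idempotent in $L_a$, $af_a = a$, so the morphism is $\rho(e,eab,h)$. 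Writing $f_a = xa$ (possible since $f_a \in Sf_a = Sa$) I would check $f_a b = xab \in S(ab)$ and $ab = a(f_a b) \in S^1(f_a b)$, so $f_a b \,\mathscr{L}\, ab$; hence $h \in E(L_{ab})$ and $\rho(e,eab,h) = \rho^{ab}(Se)$. As $Se$ was arbitrary and every object of $\mathcal{L}(S)$ has this form, this yields $\rho^a \ast \rho^b = \rho^{ab}$.

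For the isomorphism when $1 \in S$: since $Se \subseteq S1 = S$ for every idempotent $e$, the object $S$ is the greatest object of $\mathcal{L}(S)$ and $j(Se,S) = \rho(e,e,1)$. For injectivity, if $\rho^a = \rho^b$ then in particular their components at $S1$ agree; but $\rho^a(S1) = \rho(1,a,f_a)$ is, as a map of sets, $x \mapsto xa$, so evaluating at $1$ gives $a = b$. For surjectivity, given $\gamma \in T\mathcal{L}(S)$ with vertex $d = Sf$ I would put $a := \gamma(S)(1)$, so that $\gamma(S) = \rho(1,a,f)$. Normality of $\gamma$ supplies an object $c_0$ with $\gamma(c_0)$ an isomorphism, hence a bijective set map onto $d$; combined with the cone condition $\gamma(c_0) = j(c_0,S)\gamma(S)$ this forces $\gamma(S) : S \to d$ to be surjective, so $d = \gamma(S)(S) = Sa$. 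Then for each $e \in E(S)$ the cone condition gives $\gamma(Se) = j(Se,S)\gamma(S) = \rho(e,e,1)\rho(1,a,f) = \rho(e,ea,f)$, which equals $\rho^a(Se)$ because $Sf = Sa$. Hence $\gamma = \rho^a$, so $a \mapsto \rho^a$ is a bijective homomorphism and therefore an isomorphism $S \to T\mathcal{L}(S)$.

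The bulk of the work will be routine bookkeeping with the dual of Proposition \ref{pro1} --- tracking which translation element and which idempotent name a given morphism of $\mathcal{L}(S)$, and noting that two such expressions coincide as soon as the underlying set maps do, which in turn happens once the relevant principal ideals match. The one step that is genuinely a matter of content rather than formal manipulation is the use of normality in the surjectivity part: it is exactly what forces $\gamma(S)$ to be onto, and hence forces the vertex $d$ to equal $Sa$ for the element $a = \gamma(S)(1)$ we read off; without it the reconstruction $\gamma \mapsto a$ need not return a cone with the correct vertex.
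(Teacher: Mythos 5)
The paper does not actually prove this proposition; it is quoted from Nambooripad's \emph{Theory of Cross-connections} (reference \cite{cross}), so there is no in-paper argument to compare against. Your proof is correct and is essentially the standard one: the computation $(\rho^a \ast \rho^b)(Se) = \rho(e,ea,f_a)\rho(f_a,f_ab,h) = \rho(e,eab,h)$ with $h \in E(L_{f_ab}) = E(L_{ab})$ (using $af_a = a$ and the fact that $\mathscr{L}$ is a right congruence) gives the homomorphism property, and in the monoid case the component at the greatest object $S1$ correctly recovers $a$ for injectivity while normality of a cone $\gamma$ forces $\gamma(S)$ to be onto its vertex, which is exactly what makes the reconstruction $\gamma = \rho^{\gamma(S)(1)}$ work for surjectivity. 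No gaps.
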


\section{The category of partitions of a set}
A partition $\pi$ of $X$ is a family of subsets $A_i$ of $X$ such that $\bigcup A_i \: = \: X$ and $A_i\cap A_j = \phi$ for $i \ne j$. A partition is said to be non-identity if atleast one $A_i$ has more than one element. Any partition $\pi$ of $X$ determines an equivalence relation $\rho$ such that $\pi = X/\rho$, namely $a \rho b$ if and only if $a$ and $b$ belong to the same $A_i$ for some $i$. Conversely given any equivalence relation $\rho$, the family of sets $a\rho$ with $a$ in $X$ is a partition of $X$. For convenience, we denote this equivalence relation $\rho$ also by $\pi$ itself. Thus if $\pi$ is a partition of $X$, then we write $\pi$ for the equivalence relation on $X$ determined by the partition $\pi$. For a partition $\pi = \{A_i : i \in I\}$ and $a \in X$, we denote by $[a]_\pi$ the set $A_i$ such that $a \in A_i$\\
Given a non-identity partition $\pi$ of $X$, we denote by $\bar{\pi}$ the set of all functions from $\pi$  to $X$. If ${\eta}$ is a function from $\pi_2$ to $\pi_1$, we define $P_\eta : \bar{\pi_1}$ to $\bar{\pi_2}$ by $(\alpha)P_\eta = {\eta}\alpha$ for every $\alpha \in \bar{\pi_1}$.\\
Now we define the category of partitions $\Pi(X)$ of the set $X$ as follows. The vertex set is $v\Pi(X) = \{ \: \bar{\pi} \::\:\pi $ is a non-identity partition of $X \}$  and a morphism in $\Pi(X)$ from $\bar{\pi_1}$ to $\bar{\pi_2}$ is given by $P_\eta$ as defined above.\\ 
Define partial order on $\Pi(X)$ as follows. For $\pi_1 = \{ A_i : i \in I\}$ and $\pi_2 = \{ B_j : j \in J \}$ define $\bar{\pi_1} \leq \bar{\pi_2}$ if for each j, $B_j \subseteq A_i$ for some $i$. In this case, $\vartheta: B_j \mapsto A_i$ is a well-defined map from $\pi_2 \to \pi_1$ and $P_\vartheta : \bar{\pi_1} \to \bar{\pi_2}$ is a morphism in $\Pi(X)$. We consider $P_\vartheta$ as the inclusion morphism $P_\vartheta :\bar{\pi_1} \subseteq \bar{\pi_2}$. It can be observed that $\bar{\pi_1} \leq \bar{\pi_2}$ if and only if ${\pi_2} \subseteq {\pi_1}$ as equivalence relations. 
\begin{lem}\label{lrtr}
Let $\pi_1 = \{ A_i : i \in I\}$ and $\pi_2 = \{ B_j : j \in J \}$ be partitions of $X$ such that $\bar{\pi_1} \leq \bar{\pi_2}$. Let $P_\vartheta : \bar{\pi_1} \to \bar{\pi_2}$ be the inclusion. Then there exists a retraction $P_\zeta : \bar{\pi_2} \to \bar{\pi_1}$ .ie $j(\bar{\pi_1},\bar{\pi_2})P_\zeta = 1_{\bar{\pi_1}}$. 
\end{lem}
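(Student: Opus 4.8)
\emph{Proof proposal.}
The plan is to obtain the retraction $P_\zeta$ from a purely set-theoretic section of the block map underlying the inclusion. Recall that the inclusion $P_\vartheta:\bar{\pi_1}\subseteq\bar{\pi_2}$ is induced by the map $\vartheta:\pi_2\to\pi_1$ sending a block $B_j$ to the unique block $A_i$ with $B_j\subseteq A_i$. First I would verify the two facts about $\vartheta$ that make the argument work: it is well defined and it is \emph{onto} $\pi_1$. Well-definedness is the uniqueness of such an $A_i$, which holds because distinct blocks of $\pi_1$ are disjoint while $B_j\neq\emptyset$. Surjectivity holds because, given $A_i$, any $x\in A_i$ lies in some block $B_j$ of $\pi_2$, and then $x\in B_j\subseteq\vartheta(B_j)$ together with $x\in A_i$ forces $\vartheta(B_j)=A_i$, again by disjointness of the blocks of $\pi_1$.

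Next, since $\vartheta$ is surjective, the axiom of choice supplies a map $\zeta:\pi_1\to\pi_2$ with $\vartheta\circ\zeta=\mathrm{id}_{\pi_1}$: concretely, for each $i\in I$ choose an index $j(i)\in J$ with $B_{j(i)}\subseteq A_i$ (possible by the previous paragraph) and set $\zeta(A_i)=B_{j(i)}$. Being a function from $\pi_1$ to $\pi_2$, $\zeta$ determines a morphism $P_\zeta:\bar{\pi_2}\to\bar{\pi_1}$ of $\Pi(X)$, given on $\beta\in\bar{\pi_2}$ by $(\beta)P_\zeta=\beta\circ\zeta$ (in the same convention used to define $P_\eta$).

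It then remains only to compute $P_\vartheta P_\zeta$. For $\alpha\in\bar{\pi_1}$ we have $(\alpha)P_\vartheta=\alpha\circ\vartheta\in\bar{\pi_2}$, and hence $(\alpha)(P_\vartheta P_\zeta)=\bigl((\alpha)P_\vartheta\bigr)P_\zeta=(\alpha\circ\vartheta)\circ\zeta=\alpha\circ(\vartheta\circ\zeta)=\alpha\circ\mathrm{id}_{\pi_1}=\alpha$, so $P_\vartheta P_\zeta=1_{\bar{\pi_1}}$. Since $\bar{\pi_1}\subseteq\bar{\pi_2}$ is given and $j(\bar{\pi_1},\bar{\pi_2})=P_\vartheta$, this is precisely the statement that $P_\zeta$ is a retraction from $\bar{\pi_2}$ onto $\bar{\pi_1}$.

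The only genuine point of care — which is where I expect the bookkeeping to be — is the direction of composition: the assignment $\pi\mapsto\bar{\pi}$ is contravariant, so one must make sure the chosen section satisfies $\vartheta\circ\zeta=\mathrm{id}_{\pi_1}$ and not $\zeta\circ\vartheta=\mathrm{id}_{\pi_2}$; the latter identity fails in general, and indeed $P_\zeta P_\vartheta$ is only an idempotent endomorphism of $\bar{\pi_2}$, not the identity. The appeal to the axiom of choice is unavoidable for infinite $X$, since there need be no canonical choice of a sub-block $B_{j(i)}\subseteq A_i$.
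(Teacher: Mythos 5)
Your proposal is correct and follows essentially the same route as the paper: choose a section $\zeta$ of the block map $\vartheta$ (one sub-block $B_{j(i)}\subseteq A_i$ per block $A_i$), observe $\vartheta\circ\zeta=\mathrm{id}_{\pi_1}$, and push this through the contravariant assignment $\eta\mapsto P_\eta$ to get $P_\vartheta P_\zeta=1_{\bar{\pi_1}}$. Your explicit check that $\vartheta$ is surjective (so the choice set is nonempty) is a small point the paper leaves implicit, but the argument is the same.
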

\begin{proof}
Since $\bar{\pi_1} \subseteq \bar{\pi_2}$, for each j, $B_j \subseteq A_i$ for some $i$. Define $\zeta: \pi_1 \to \pi_2$ as $(A_i)\zeta = B_j$ where $B_j$ is a subset chosen from $\{B_x \: : \: B_x \subseteq A_i \}$ . Clearly $P_\zeta$ is a morphism from $\bar{\pi_2}$ to $\bar{\pi_1}$. For $\alpha \in \bar{\pi_1}$, $(\alpha) j(\bar{\pi_1},\bar{\pi_2})P_\zeta = (\alpha) P_\vartheta P_\zeta = (\vartheta\alpha) P_\zeta = \zeta\vartheta\alpha $.\\
Now for any $A_i \in {\pi_1}$, $(A_i)\zeta\vartheta = A_i$. So $\zeta\vartheta : \pi_1 \to \pi_1$ is the identity map. Hence $(\alpha) j(\bar{\pi_1},\bar{\pi_2})P_\zeta = \alpha$ so that $j(\bar{\pi_1},\bar{\pi_2})P_\zeta : \bar{\pi_1} \to \bar{\pi_1}$ is the identity morphism. Thus $P_\vartheta P_\zeta = 1_{\bar{\pi_1}}$. And hence $P_\zeta$ is a retraction.
\end{proof}
\begin{pro} \label{propix}
$\Pi(X)$ is a normal category.
\end{pro}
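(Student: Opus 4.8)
The plan is to verify directly the three axioms of a normal category for the pair $(\Pi(X),\mathcal{P})$, with $\mathcal{P}$ the class of inclusion morphisms $P_\vartheta$ introduced above. It will help to record first two elementary facts about morphisms. For non-identity partitions $\pi_1,\pi_2$ of $X$, the rule $\eta\mapsto P_\eta$ is a bijection from the set of functions $\pi_2\to\pi_1$ onto $\Pi(X)(\bar{\pi_1},\bar{\pi_2})$: surjectivity is the definition, and injectivity follows by evaluating at a transversal $\alpha\in\bar{\pi_1}$ of $\pi_1$, which is an injective map and hence left-cancellable. Secondly, composition is contravariant in the underlying maps, $P_\eta P_\mu = P_{\eta\mu}$; from this one reads off that $P_\eta$ is an isomorphism precisely when $\eta$ is a bijection, and that $P_\eta:\bar{\pi_2}\to\bar{\pi_1}$ is a retraction onto $\bar{\pi_1}\subseteq\bar{\pi_2}$ precisely when $\eta$ is a section of the canonical surjection $\vartheta:\pi_2\to\pi_1$ (cf. Lemma \ref{lrtr}).

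First I would check that $(\Pi(X),\mathcal{P})$ is a category with subobjects. That $\mathcal{P}$ is a strict preorder with $v\mathcal{P}=v\Pi(X)$ is immediate, since $\leq$ is a partial order and between comparable vertices the structure map $\vartheta$, hence $P_\vartheta$, is uniquely determined. Each inclusion $P_\vartheta:\bar{\pi_1}\to\bar{\pi_2}$ is a monomorphism, because its underlying map $\vartheta:\pi_2\to\pi_1$ is surjective — every block of $\pi_1$ is a union of blocks of $\pi_2$ — hence right-cancellable, and by the composition rule this passes to $P_\vartheta$. For the third axiom, if $P_\eta P_{\vartheta_{12}}=P_{\vartheta_{02}}$ with $P_{\vartheta_{12}},P_{\vartheta_{02}}$ inclusions, then $\eta\vartheta_{12}=\vartheta_{02}$ as maps $\pi_2\to\pi_0$; writing each block $A$ of $\pi_1$ as the union of the blocks of $\pi_2$ inside it, this identity forces $A$ to lie in the single block $\eta(A)$ of $\pi_0$. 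Hence $\bar{\pi_0}\leq\bar{\pi_1}$, $\eta$ is the canonical map $\pi_1\to\pi_0$, and $P_\eta\in\mathcal{P}$.

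Next I would produce a normal factorization of an arbitrary morphism $P_\eta:\bar{\pi_1}\to\bar{\pi_2}$ by transporting the ordinary image factorization of the set map $\eta:\pi_2\to\pi_1$ into $\Pi(X)$. Merging blocks of $\pi_2$ with the same $\eta$-value yields a coarsening $\pi_2'$ of $\pi_2$, with canonical surjection $\delta:\pi_2\to\pi_2'$; since $\ker\delta=\ker\eta$, $\eta$ factors as $\bar\eta\delta$ with $\bar\eta:\pi_2'\to\pi_1$ injective and image $Q:=\mathrm{im}\,\eta$. To exhibit $Q$ as a transversal of a coarsening of $\pi_1$, fix any map $\pi_1\setminus Q\to Q$ and absorb each block of $\pi_1$ lying outside $Q$ into its chosen $Q$-block; this produces a coarsening $\pi_1'$ of $\pi_1$ whose blocks biject with $Q$, together with an injection $\epsilon:\pi_1'\to\pi_1$ picking out the unique $Q$-sub-block, which is a section of the canonical surjection $\pi_1\to\pi_1'$. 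Both $\pi_1'$ and $\pi_2'$ remain non-identity, coarsening only enlarging blocks. With $\chi:\pi_2'\to\pi_1'$ the bijection obtained from $\bar\eta$ and $\epsilon$, the map $\eta$ is then the composite of $\delta:\pi_2\to\pi_2'$, then $\chi:\pi_2'\to\pi_1'$, then $\epsilon:\pi_1'\to\pi_1$, so $P_\eta=e\,u\,j$ with $e=P_\epsilon$ a retraction onto $\bar{\pi_1'}\subseteq\bar{\pi_1}$, $u=P_\chi$ an isomorphism, and $j=P_\delta=j(\bar{\pi_2'},\bar{\pi_2})$. I expect this to be the main obstacle — not the abstract factorization, but the bookkeeping that keeps $\pi_1'$ and $\pi_2'$ genuine non-identity partitions of all of $X$, which is exactly what the auxiliary map $\pi_1\setminus Q\to Q$ is for.

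Finally, for the third axiom, given a vertex $\bar{\pi}$ I would fix a transversal $c:\pi\to X$ (so $c(A)\in A$ for each block $A$) and define $\sigma:v\Pi(X)\to\Pi(X)$ by $\sigma(\bar{\pi'})=P_{\theta_{\pi'}}$, where $\theta_{\pi'}:\pi\to\pi'$ sends $A$ to the block of $\pi'$ containing $c(A)$. Then $\theta_\pi=\mathrm{id}_\pi$, so $\sigma(\bar{\pi})=1_{\bar\pi}$; the compatibility $j(\bar{\pi''},\bar{\pi'})\sigma(\bar{\pi'})=\sigma(\bar{\pi''})$ for $\bar{\pi''}\leq\bar{\pi'}$ reduces to the triviality that the block of $\pi''$ containing the $\pi'$-block of $c(A)$ is just the $\pi''$-block of $c(A)$; and $\sigma$ is a normal cone since $\sigma(\bar{\pi})=1_{\bar\pi}$ is an isomorphism. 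Combining the three verifications shows that $\Pi(X)$ is a normal category.
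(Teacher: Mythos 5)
Your proof is correct and follows essentially the same route as the paper's: the same image-factorization construction for normal factorizations (merging blocks of $\pi_2$ with equal $\eta$-value, absorbing the blocks of $\pi_1$ outside $\mathrm{im}\,\eta$ into image blocks, and sandwiching the resulting bijection between a retraction and an inclusion), and the same transversal-induced normal cone with identity component at the vertex. You are somewhat more careful than the paper about points it leaves implicit — the verification of the subobject axioms, the injectivity of $\eta\mapsto P_\eta$, and the fact that the coarsenings remain non-identity partitions — but the underlying argument is the same.
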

\begin{proof}
By the above discussion, $\Pi(X)$ is a category with subobjects and every inclusion has an associated retraction as described above.\\
Now we prove the existence of normal factorization. Let $P_\eta$ from $\bar{\pi_1}$ to $\bar{\pi_2}$ be a morphism in $\Pi(X)$ so that $\eta$ is a mapping from $\pi_2$ to $\pi_1$. Let $\sigma_\eta$ be the equivalance relation on $X$ given by\\
$$ \sigma =\{ (x,y) \: :\: [x]\eta = [y]\eta \} $$
where $[x] \in \pi_2$. Then clearly $\pi_2 \subseteq \sigma$. Let $\vartheta : \pi_2 \to \sigma$ be the inclusion given by $[x] \mapsto [x\sigma]$ where $[x]$ is the equivalence class containing $x$ in $\pi_2$ and $[x\sigma]$ is the equivalence class containing $x$ in $\sigma$. Since $\vartheta: \pi_2 \to \sigma$ is the inclusion map, we see that $P_\vartheta : \bar{\sigma} \to \bar{\pi_2}$ is the inclusion morphism.\\
Let $\gamma = \gamma_\eta$ be the partition of $X$ defined as follows. Let $\pi_1 = \{ A_i : i \in I\}$ and Im $\eta =\{ A_i : i \in I'\}$ for some $I'\subseteq I$. Then fix an element $1 \in I'$ and define $\gamma := \{ B \cup A_1 , A_i : i \in I'\setminus\{1\} \}$ where $B = \cup\{ A_j : A_j \notin \text{Im }\eta \} $. Then clearly $\gamma$ is a partition of $X$ and $\pi_1 \subseteq \gamma$.\\
Let $\zeta : \gamma \to \pi_1$ be defined as follows.
\begin{equation*}
\begin{split}
A_1\cup& B\mapsto A_1\\
A_j& \mapsto A_j \text{ for } j \in I'\setminus\{1\}
\end{split}
\end{equation*}
Then we see that $P_\zeta$ is a retraction from $\bar{\pi_1}$ to $\bar{\gamma}$ as in the proof of lemma \ref{lrtr}. Now define $u: \sigma \to \gamma$ as follows. $u : [x\sigma] \mapsto [x]\eta$ where $[x]$ is the equivalence class of $\pi_2$ containing $x$. Clearly $u$ is well-defined and is a bijection. And since $u$ is a bijection, $P_u : \bar{\gamma} \to \bar{\sigma}$ is an isomorphism.\\
Now to see that $P_\eta = P_\zeta P_u P_\vartheta$ consider any $[x] \in \pi_2$; then $([x])\vartheta u \zeta\: = \: ([x\sigma])u \zeta \:=\: ([x]\eta)\zeta \:=\: [x]\eta$. And hence for any $\alpha \in \bar{\pi_1}$, $(\alpha)P_\eta = (\alpha)P_{\vartheta u \zeta} = \vartheta u \zeta\alpha = (\alpha)P_{\zeta}P_{u}P_{\vartheta}$. And consequently $P_\eta = P_{\zeta}P_{u}P_{\vartheta}$.\\
Given any partition $\pi$ of $X$, let $\sigma$ be a cone in $\Pi(X)$ with vertex $\bar{\pi}$ defined as follows. If $\pi = \{ A_{i} : i \in I\}$, let $u: \pi \to \pi $ be a mapping such that $u(A_i) \subseteq A_i$ for all $A_i \in \pi$. For any partition $\pi_f = \{ B_{j} : j \in J\}$, define $\nu : \pi \to {\pi_f} $ as $A_i \mapsto B_j$ such that $u(A_i) \subseteq B_j$. Then define $\sigma(\bar{\pi_f}) = \pi_\nu$. Clearly $\nu :\pi \to \pi$ is identity as $\pi_{\nu} = 1_{\bar{\pi}}$. Then $\sigma$ is a normal cone with vertex $\bar{\pi}$ and $\sigma(\bar{\pi}) = 1_{\bar{\pi}} $. Hence $\Pi(X)$ is a normal category.
\end{proof}
Let $X$ be a non-empty set. The transformation semigroup $T_X$ on $X$ is the semigroup of all non-invertible transformations on $X$. The partitions of $X$ can be related to the idempotents in $T_X$ as follows. Let $\pi$ be a partition of $X$ and let $A$ be a cross-section of $\pi$. Let $e: X \to X$ be defined by $e(x) = a$ where $a \in A$ and $x \in [a]_\pi$. Then $e$ is an idempotent in $T_X$ such that $\pi_e = \pi$ where $\pi_e$ is the partition determined by $e$.\\
Now we proceed to show that the category of right ideals of the transformation semigroup $T_X$ is isomorphic to $\Pi(X)$. The product of transformations is taken in the order it is written .i.e from left to right. For any $\alpha \in T_X$, we denote by $\pi_\alpha$ the partition of $X$ induced by $\alpha$. As equivalence relation, we may write $\pi_\alpha = \{ (x,y) : x\alpha = y\alpha \} $. Also $\pi_\alpha = \{ y \alpha^{-1} : y \in \text{Im }\alpha \} $.\vspace*{.2cm}\\ 
The following properties of $T_X$ will be used often.
\begin{lem} \label{tx} (cf. \cite{clif} )
Let $\alpha , \beta $ be arbitrary elements of $T_X$. Then the following statements hold. 
\begin{enumerate}
\item There exists $ \: \varepsilon \in T_X $ such that $\alpha\varepsilon = \beta $ if and only if $ X\alpha \supseteq X\beta $. Hence $\alpha \mathscr{L} \beta $ if and only if $ X\alpha = X\beta$.
\item There exists $ \: \varepsilon \in T_X $ such that $\varepsilon\alpha = \beta $ if and only if $ \pi_\alpha \subseteq \pi_\beta $. Hence $\alpha \mathscr{R} \beta $ if and only if $ \pi_\alpha = \pi_\beta$.
\end{enumerate}
\end{lem}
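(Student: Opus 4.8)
\emph{Proof plan.} The plan is to prove each of the two biconditionals by establishing its two implications, and then to read off the two ``Hence'' clauses. Note that $\alpha\mathscr{L}\beta$ holds exactly when $\beta=\varepsilon\alpha$ and $\alpha=\delta\beta$ for suitable multipliers (possibly the identity); applying item~(1) in both orderings, this amounts to $X\alpha\supseteq X\beta$ together with $X\beta\supseteq X\alpha$, i.e. $X\alpha=X\beta$. Symmetrically, item~(2) will give $\alpha\mathscr{R}\beta\iff\pi_\alpha=\pi_\beta$. So the real content is the two biconditionals relating a one-sided factorization of $\beta$ through $\alpha$ to a containment of images (resp. of induced partitions).

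For the ``only if'' directions nothing is needed beyond the way images and induced partitions behave under composition, since products of transformations are computed pointwise: the image of a product $\varepsilon\alpha$ is $(X\varepsilon)\alpha\subseteq X\alpha$, so $\beta=\varepsilon\alpha$ forces $X\beta\subseteq X\alpha$; and if $\beta=\alpha\varepsilon$ then $x\alpha=y\alpha$ already yields $x\beta=(x\alpha)\varepsilon=(y\alpha)\varepsilon=y\beta$, i.e. $\pi_\alpha\subseteq\pi_\beta$. Each is a one-line verification.

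For the ``if'' directions I would exhibit the witness $\varepsilon$ explicitly. Given $X\alpha\supseteq X\beta$: for each $x\in X$ the set of elements of $X$ that $\alpha$ maps to $x\beta$ is non-empty because $x\beta\in X\beta\subseteq X\alpha$; selecting one such element for each $x$ defines a transformation $\varepsilon$ with $\varepsilon\alpha=\beta$. Given $\pi_\alpha\subseteq\pi_\beta$: the assignment $x\alpha\mapsto x\beta$ is a well-defined map on $X\alpha$ precisely because of this inclusion of partitions; extending it over the rest of $X$ arbitrarily produces $\varepsilon$ with $\alpha\varepsilon=\beta$.

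The one delicate point — and the main, though minor, obstacle — is that the $T_X$ of this paper consists of the \emph{non-invertible} transformations only, so I must check both that the $\varepsilon$'s constructed above may be taken inside $T_X$ and that $\mathscr{L},\mathscr{R}$ in the statement are the Green's relations of $T_X$, not of the full transformation monoid $\mathcal{T}_X$. The clean remedy is to observe that $T_X$ is a two-sided ideal of $\mathcal{T}_X$ (a product with at least one non-invertible factor is non-invertible), and then invoke the standard fact that Green's relations and the associated principal-ideal preorders on an ideal are the restrictions of those on the ambient semigroup. This lets one carry out all of the above inside $\mathcal{T}_X$ — where the witnesses just built, and if necessary the identity, are available — and then restrict to $T_X$. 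With this understood, the two ``Hence'' clauses follow as indicated in the first paragraph, which finishes the proof.
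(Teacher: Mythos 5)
The paper offers no proof of this lemma at all --- it is quoted from Clifford and Preston \cite{clif} --- so there is no internal argument to compare yours against; I can only judge the proposal on its own terms. Its core (the four implications, read off from how images and induced partitions behave under composition, plus the explicit construction of a witness $\varepsilon$ in each ``if'' direction) is the standard textbook proof and is correct in the full transformation monoid. But you should flag, rather than silently adopt, the fact that your computations pair $\varepsilon\alpha=\beta$ with $X\beta\subseteq X\alpha$ and $\alpha\varepsilon=\beta$ with $\pi_\alpha\subseteq\pi_\beta$, which is the \emph{opposite} pairing from the one printed in the statement. Under the paper's declared left-to-right composition (so that $x(\alpha\varepsilon)=(x\alpha)\varepsilon$), your pairing is the correct one, and it is also the one the paper actually uses later (e.g.\ ``since $ah=b$, by Lemma \ref{tx}, $\pi_a\subseteq\pi_b$'' in the proof of Lemma \ref{lem3}); the printed statement evidently has the multiplier on the wrong side in both items. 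As written, your proof establishes a different assertion from the one stated, so say explicitly that you are proving the convention-corrected version.

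The genuine gap is in your ``delicate point''. The claim that a product with a non-invertible factor is non-invertible, and hence that $T_X$ is an ideal of the full transformation monoid $\mathcal{T}_X$, is false for infinite $X$: take $\alpha$ the successor map on $\mathbb{N}$ and $\alpha'$ a left inverse of it (predecessor, extended arbitrarily at $0$); both are non-bijective, yet $\alpha\alpha'$ is the identity, so the non-invertible maps do not even form a subsemigroup. Worse, the existence clauses of the lemma themselves fail there: for that same $\alpha$, the equation $\varepsilon\alpha=\alpha$ forces $x\varepsilon=x$ for all $x$, so the \emph{only} witness is the identity and no non-invertible $\varepsilon$ exists, even though the relevant image and partition conditions hold trivially for $\beta=\alpha$. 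Your remedy therefore only works when ``non-invertible'' genuinely cuts out an ideal, i.e.\ for finite $X$, where $T_X$ is the singular semigroup $\mathrm{Sing}_X$; there your rank argument is valid, and one can also verify directly that each constructed witness has proper image and so lies in $T_X$. For infinite $X$ you should either state and prove the lemma in $\mathcal{T}_X$ exactly as \cite{clif} does, or restrict to finite $X$; the ideal-restriction argument cannot be used to transfer the conclusion to the non-invertible maps as the paper defines them.
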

\noindent Now given $v \in fSe$, we define $$\eta_{v} : \pi_f \to \pi_e \text{ by }(xf^{-1})\eta_{v} \: = \: (xv)e^{-1} \text{ for } x \in \text{Im} f$$ 
\begin{lem} \label{lemv}
For $v \in fSe$, $\eta_{v}$ is a well defined function from $\pi_f$ to $\pi_{e}$.
\end{lem}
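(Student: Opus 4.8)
The plan is to extract from the hypothesis $v\in fSe$ the pointwise absorption identities $fv=v=ve$, and then to check the two things hiding in the phrase \emph{well-defined function}: that the prescribed value $(xv)e^{-1}$ is genuinely an element of $\pi_e$, and that the prescription covers --- and is unambiguous on --- every block of $\pi_f$.

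First I would note that $v\in fSe$ means $v=fse$ for some $s\in T_X$; since $f$ and $e$ are idempotent, $fv=(ff)se=fse=v$ and $ve=fs(ee)=fse=v$, so $fv=v=ve$. Next, recall that by definition $\pi_f=\{\,xf^{-1}:x\in\mathrm{Im}\,f\,\}$, so every block of $\pi_f$ does arise as $xf^{-1}$ for some $x\in\mathrm{Im}\,f$; moreover such an $x$ is uniquely determined by its block, for if $x,x'\in\mathrm{Im}\,f$ and $xf^{-1}=x'f^{-1}$ then choosing any $y$ in this common fibre gives $x=yf=x'$. Hence the rule $xf^{-1}\mapsto(xv)e^{-1}$ is defined on all of $\pi_f$ and is single-valued.

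It remains to see that each output $(xv)e^{-1}$ really is a block of $\pi_e$, that is, that $xv\in\mathrm{Im}\,e$; but $ve=v$ gives $xv=(xv)e\in\mathrm{Im}\,e$, so $(xv)e^{-1}$ is precisely the $e$-fibre over $xv$, an element of $\pi_e=\{\,ze^{-1}:z\in\mathrm{Im}\,e\,\}$. Putting these together, $\eta_v:\pi_f\to\pi_e$ is a well-defined function. I expect no serious obstacle here: the only point needing a moment's thought is that membership in $fSe$ delivers $ve=v$, which is exactly what forces the target into $\pi_e$; equivalently, one may describe $\eta_v$ in a representative-free way by $[y]_{\pi_f}\mapsto[yv]_{\pi_e}$ for arbitrary $y\in X$, in which case the single-valued check becomes the equally short computation that $yf=y'f$ implies $yv=y(fv)=(yf)v=(y'f)v=y'v$, again using $fv=v$.
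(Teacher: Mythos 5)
Your proposal is correct and follows essentially the same route as the paper: show $xv\in\mathrm{Im}\,e$ (the paper gets $\mathrm{Im}\,v\subseteq\mathrm{Im}\,e$ directly from $v\in fSe$, you get it from $ve=v$), and show that $xf^{-1}=yf^{-1}$ with $x,y\in\mathrm{Im}\,f$ forces $x=y$. The extra detail you supply (the explicit absorption identities $fv=v=ve$ and the representative-free description) is harmless elaboration of the same argument.
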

\begin{proof}
Let $x \in $ Im $f$, then $xf^{-1} \in \pi_f$. As $v \in fSe$, Im $v \subseteq$ Im $e$. And hence $xv \in \text{Im }e$ and $(xv)e^{-1} \in \pi_{e}$. And $\eta_{v}$ maps $\pi_f$ to $\pi_{e}$. Now if $xf^{-1} = yf^{-1}$ for $x,y \in$ Im $f$, then $x = y$. So $\eta_{v}$ is well-defined.
\end{proof}
\begin{rmk}\label{rmkpi}
Observe that the definition of $\eta_{v}$ depends on $v_{|\text{Im }f} : \text{Im }f \to \text{Im }e$ only. In general given any function $f$ from $A$ to $B$ such that $A$ and $B$ are cross-sections of the partitions $\pi_1$ and $\pi_2$; we can uniquely define a function $\eta_f$ from $\pi_1$ to $\pi_2$ as above. Conversely given a mapping ${\eta} : \pi_1 \to \pi_2 $, and for given cross-sections $A$, $B$ we can uniquely define an 'induced' mapping $f \in T_X$ such that $f_{|A}$ is a function from $A$ to $B$ and $\eta = \eta_{f}$.
\end{rmk}
\begin{lem}\label{lemv1}
Let $S = T_X$ and $e,f,g,h \in E(S)$. Let $v \in fSe$ and $u \in hSg$ be such that $\eta_{v} = \eta_{u}$. Then $\pi_f = \pi_h$, $\pi_e = \pi_g$ and $v = ue$. 
\end{lem}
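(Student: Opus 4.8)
This lemma is the $T_X$-version of the equality criterion for morphisms of $\mathcal{R}(S)$ in Proposition~\ref{pro1} (with $v$ playing the role of the translation $\lambda(e,v,f)$ and $u$ that of $\lambda(g,u,h)$), so the plan is to recover the three conclusions $\pi_f=\pi_h$, $\pi_e=\pi_g$, $v=ue$ in that order. Recall first that, as $e,f,g,h\in E(S)$, the condition $v\in fSe$ amounts to $fv=v=ve$ and $u\in hSg$ to $hu=u=ug$; in particular $\mathrm{Im}\,v\subseteq\mathrm{Im}\,e$ and $\mathrm{Im}\,u\subseteq\mathrm{Im}\,g$, and every point of $\mathrm{Im}\,e$ (resp.\ $\mathrm{Im}\,g$) is fixed by $e$ (resp.\ $g$). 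By Lemma~\ref{lemv}, $\eta_v$ is a function with domain $\pi_f$ and codomain $\pi_e$ while $\eta_u$ is one with domain $\pi_h$ and codomain $\pi_g$, so the hypothesis $\eta_v=\eta_u$ gives $\pi_f=\pi_h$ and $\pi_e=\pi_g$ at once; equivalently, by Lemma~\ref{tx}, $f\,\mathscr{R}\,h$ and $e\,\mathscr{R}\,g$.

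It then remains to prove $v=ue$, which I would establish pointwise. Fix $z\in X$. The $\pi_f$-class of $z$ is $(zf)f^{-1}$, and $\eta_v\big((zf)f^{-1}\big)=(zfv)e^{-1}=(zv)e^{-1}$ since $fv=v$. Because $\pi_f=\pi_h$, this same class is $(zh)h^{-1}$, and $\eta_u\big((zh)h^{-1}\big)=(zhu)g^{-1}=(zu)g^{-1}$ since $hu=u$. Hence $\eta_v=\eta_u$ together with $\pi_e=\pi_g$ yields $(zv)e^{-1}=(zu)g^{-1}$ as subsets of $X$. Since $zv\in\mathrm{Im}\,e$ and $zu\in\mathrm{Im}\,g$, the left-hand side is the block of $\pi_e$ containing $zv$ and the right-hand side the block of $\pi_e$ containing $zu$; so $zv$ and $zu$ belong to the same block of $\pi_e$, whence $(zu)e=(zv)e$. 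Finally $(zv)e=zv$ because $zv\in\mathrm{Im}\,e$ and $e$ is idempotent, so $z(ue)=(zu)e=zv$ for every $z\in X$, i.e.\ $ue=v$.

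I do not expect a real obstacle; the proof is essentially bookkeeping once two points are handled with care. The first is reading $\eta_v=\eta_u$ as an equality of functions \emph{together with} their specified codomains — this is what delivers $\pi_e=\pi_g$ (with only the graphs equal this could fail, since $\eta_v$ need not be surjective onto $\pi_e$). The second is the disciplined use of the idempotency identities $fv=v=ve$, $hu=u=ug$ and $e=e^{2}$, in order to replace $zfv$ by $zv$, $zhu$ by $zu$, and $(zv)e$ by $zv$ exactly where needed; the one mildly delicate step is passing from ``the blocks $(zv)e^{-1}$ and $(zu)g^{-1}$ coincide'' to ``$(zu)e=zv$'', which works because $zv$ is precisely the representative of its block lying in $\mathrm{Im}\,e$.
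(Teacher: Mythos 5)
Your proposal is correct. The first half coincides with the paper's proof: both obtain $\pi_f=\pi_h$ and $\pi_e=\pi_g$ by equating domains and codomains of $\eta_v=\eta_u$ (and you rightly flag that this reads equality of functions as including the specified codomain, which is how the paper intends it). For the remaining claim $v=ue$ the two arguments diverge slightly in packaging. The paper first checks $fu=u$ so that $ue=fue\in fSe$, then verifies $\eta_{ue}=\eta_u$ via the identity $(xu)g^{-1}=(xug)g^{-1}=(xue)e^{-1}$ (using $g\,\mathscr{R}\,e$), and finally invokes the uniqueness clause of Remark~\ref{rmkpi} --- a function $\pi_f\to\pi_e$ together with the cross-sections $\mathrm{Im}\,f$, $\mathrm{Im}\,e$ determines the induced element of $fSe$ uniquely --- to conclude $ue=v$. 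You instead unfold that uniqueness into an explicit pointwise computation: from $(zv)e^{-1}=(zu)g^{-1}$ you extract $(zu)e=(zv)e=zv$, hence $z(ue)=zv$ for all $z$. The underlying identity is the same in both proofs; your version is more self-contained (it does not lean on the Remark and in effect reproves its uniqueness statement in this instance), while the paper's is shorter given that the Remark is already available. Your handling of the one delicate point --- that $zv$ is the unique representative of its $\pi_e$-block lying in $\mathrm{Im}\,e$, which is what converts equality of blocks into the equality $(zu)e=zv$ --- is exactly right.
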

\begin{proof}
By equating domains and codomains, we get $\pi_f = \pi_h$, $\pi_e = \pi_g$. Now since $\pi_f = \pi_h$, we have $f \mathscr{R} h$ in $S$ and so $fu = f(hu) = (fh)u = hu = u$. So $ue = fue \in fSe$. Now we see that $\eta_u = \eta_{ue}$. This follows from the observation that $g \mathscr{R} e$. That is $(xf^{-1})\eta_u = (xu)g^{-1} = (xug)g^{-1} = (xue)e^{-1} $(since $g \mathscr{R} e$) $= (xf^{-1})\eta_{ue} =(xf^{-1})\eta_{v}$. Now from the uniqueness in remark \ref{rmkpi}, we get $ue = v$.
\end{proof}
\begin{lem}\label{lemv2}
Let $v \in fSe$ and $u \in hSg$ be such that $\pi_f = \pi_h$, $\pi_e = \pi_g$ and $v = ue$. Then $\eta_{v} = \eta_{u}$. 
\end{lem}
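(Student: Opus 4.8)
The plan is to verify directly that $\eta_v$ and $\eta_u$ coincide as functions. By hypothesis $\pi_f=\pi_h$ and $\pi_e=\pi_g$ as partitions of $X$, so the two maps already have the same domain $\pi_f=\pi_h$ and the same codomain $\pi_e=\pi_g$, and it remains to show they agree block by block. The only point needing care is that a block $C\in\pi_f=\pi_h$ is indexed in $\eta_v$ by the unique element of $\text{Im }f$ lying in $C$ and in $\eta_u$ by the unique element of $\text{Im }h$ lying in $C$, and these two representatives need not be equal; so the computation must pass between the two cross-sections.

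First I would record the elementary facts to be used: since $v\in fSe$ we have $\text{Im }v\subseteq\text{Im }e$ and $v=fv$, and since $u\in hSg$ we have $\text{Im }u\subseteq\text{Im }g$ and $u=hu$; by hypothesis $v=ue$; and for any idempotent $e\in T_X$ and any $y\in X$, the elements $y$ and $ye$ lie in the same block of $\pi_e$ (both are carried by $e$ to $ye$). Since $\pi_e=\pi_g$, a block of $\pi_e$ is the same subset of $X$ as the corresponding block of $\pi_g$.

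Then I would fix a block $C\in\pi_f=\pi_h$, let $x$ be the element of $\text{Im }f$ in $C$ and $x'$ the element of $\text{Im }h$ in $C$. As $x$ and $x'$ lie in the same block of $\pi_f=\pi_h$, we get $xh=x'h$, and hence, using $u=hu$, $\;xu=(xh)u=(x'h)u=x'u$. Therefore $xv=x(ue)=(xu)e=(x'u)e$. Now $(xv)e^{-1}$ is the block of $\pi_e$ containing $(x'u)e$, which by the idempotent fact above is the block of $\pi_e$ containing $x'u$; since $\pi_e=\pi_g$ this block equals $(x'u)g^{-1}$. Hence $(C)\eta_v=(xv)e^{-1}=(x'u)g^{-1}=(C)\eta_u$. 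As $C$ was arbitrary, $\eta_v=\eta_u$.

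I do not anticipate a genuine obstacle: the statement is the converse of Lemma \ref{lemv1} and the proof is a short computation. The only place to stay alert is to resist assuming $x=x'$; the content of the argument is precisely the identity $xu=x'u$, which compensates for the fact that $\text{Im }f$ and $\text{Im }h$ may be different cross-sections of the common partition $\pi_f=\pi_h$.
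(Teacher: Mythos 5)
Your proof is correct and follows essentially the same direct computation as the paper's, which chains $(xf^{-1})\eta_v = (xf^{-1})\eta_{ue} = (xue)e^{-1} = (xug)g^{-1} = (xu)g^{-1} = (xf^{-1})\eta_u$ using $v=ue$ and $\pi_e=\pi_g$. The only difference is that you explicitly justify evaluating $\eta_u$ at a block via its $\text{Im}\,f$-representative rather than its $\text{Im}\,h$-representative (your identity $xu=x'u$, obtained from $u=hu$), a point the paper's computation leaves implicit.
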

\begin{proof}
Suppose $\pi_f = \pi_h$, $\pi_e = \pi_g$ and $v = ue$. Then $\eta_{v}$ and $\eta_{u}$ will be functions with domain $\pi_f = \pi_h$ and co-domain $\pi_e = \pi_g$. Since $v = ue$, as argued above $(xf^{-1})\eta_{v} = (xf^{-1})\eta_{ue} =  (xue)e^{-1}= (xug)g^{-1} = (xu)g^{-1} = (xf^{-1})\eta_u $ and hence $\eta_{v} = \eta_{u}$.
\end{proof}
\noindent By the above discussion, given a non-identity partition $\pi_e$ of $X$ induced by an idempotent transformation $e$ and $\bar{\pi_e}$ the set of all functions from $\pi_e$ to $X$, $P_v:\alpha \mapsto \eta_{v}\alpha$ for every $\alpha \in \bar{\pi_e}$ will be a morphism from $\bar{\pi_e}$ to $\bar{\pi_f}$ for $v \in fSe$. Hence the category $\Pi(X)$ of partitions can also be defined as follows. The vertex set $v\Pi(X) = \{ \: \bar{\pi_e} \::\:e \in E(T_X) \}$ and a morphism in $\Pi(X)$ from $\bar{\pi_e}$ to $\bar{\pi_f}$ is given by $P_v:\bar{\pi_e} \to \bar{\pi_f}$ for $v \in fSe$ defined by $\alpha \mapsto \eta_v\alpha$.
Now we show that $\Pi(X)$ is a category with subobjects with inclusions defined as follows.\\
When $\pi_f \subseteq \pi_e$ so that $e \in fSe$, then $P_e : \bar{\pi_e} \to \bar{\pi_f}$ are the inclusions. Here $\eta_e: \pi_f \to \pi_e$ maps $xf^{-1}$ to $(xe)e^{-1}$ and $\alpha \mapsto \eta_e\alpha$. 
Now we define a functor $G : \mathcal{R}(T_X) \to \Pi(X) $ as
\begin{equation} \label{eqnGP}
vG (eS) = \bar{\pi_e} \quad\text{and}  \quad
G(\lambda(e,v,f)) = P_v
\end{equation}
Observe $\lambda(e,v,f)$ is from $eS$ to $fS$ and $P_v$ is from $\bar{\pi_e}$ to $\bar{\pi_f}$. Now we need to show that $G$ is an inclusion preserving order-isomorphism. For that first we need to show that $G$ is a covariant functor.
\begin{lem} \label{lemG1}
$G$ as defined in equation \ref{eqnGP} is a well defined covariant functor from $\mathcal{R}(T_X)$ to $\Pi(X) $.
\end{lem}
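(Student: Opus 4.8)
The goal is to check the three conditions making $G$ a covariant functor: that $vG$ and $G$ are well defined, that $G$ preserves identities, and that $G$ preserves composition. (Inclusion-preservation and the order-isomorphism property are treated separately afterwards and are not part of this lemma.)

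\emph{Well-definedness.} On objects, if $eS = e'S$ for $e,e'\in E(T_X)$ then $e\mathscr{R}e'$, so Lemma \ref{tx} gives $\pi_e = \pi_{e'}$ and hence $\bar{\pi_e} = \bar{\pi_{e'}}$; thus $vG(eS)$ is independent of the chosen idempotent generator. On morphisms, the same morphism of $\mathcal{R}(T_X)$ may be written as $\lambda(e,v,f)$ in several ways, and by Proposition \ref{pro1} one has $\lambda(e,v,f)=\lambda(e',v',f')$ precisely when $e\mathscr{R}e'$, $f\mathscr{R}f'$, $v\in fSe$, $v'\in f'Se'$ and $v=v'e$. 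By Lemma \ref{tx} the $\mathscr{R}$-relations give $\pi_e=\pi_{e'}$ and $\pi_f=\pi_{f'}$, so Lemma \ref{lemv2} (applied with $u=v'$, $h=f'$, $g=e'$) yields $\eta_v=\eta_{v'}$ and hence $P_v=P_{v'}$. Since the discussion preceding this lemma shows $P_v$ is a morphism $\bar{\pi_e}\to\bar{\pi_f}$ whenever $v\in fSe$, and $\lambda(e,v,f):eS\to fS$ is sent to $P_v:\bar{\pi_e}\to\bar{\pi_f}$, i.e. to a morphism $vG(eS)\to vG(fS)$, the map $G$ respects domains and codomains.

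\emph{Identities and composition.} The identity at $eS$ is $\lambda(e,e,e)$ and $G(\lambda(e,e,e))=P_e$; since $\eta_e$ sends $xe^{-1}\mapsto (xe)e^{-1}=xe^{-1}$ for $x\in\text{Im }e$, we get $\eta_e=1_{\pi_e}$ and therefore $P_e=1_{\bar{\pi_e}}$. For composition, take two composable morphisms, which by Proposition \ref{pro1} we may assume share the middle idempotent, say $\lambda(e,v,f):eS\to fS$ and $\lambda(f,u,g):fS\to gS$ with $v\in fSe$, $u\in gSf$. By Proposition \ref{pro1} their composite is $\lambda(e,uv,g)$, so $G$ of the composite is $P_{uv}$, whereas $G(\lambda(e,v,f))\,G(\lambda(f,u,g))=P_vP_u$ sends $\alpha\in\bar{\pi_e}$ to $\eta_u(\eta_v\alpha)=(\eta_u\eta_v)\alpha$. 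Hence the only identity still to verify is $\eta_{uv}=\eta_u\eta_v$ as maps $\pi_g\to\pi_e$; for $y\in\text{Im }g$ one computes $(yg^{-1})\eta_u\eta_v=((yu)f^{-1})\eta_v=((yu)v)e^{-1}=(y(uv))e^{-1}=(yg^{-1})\eta_{uv}$, using $\text{Im }u\subseteq\text{Im }f$ and associativity. (Equivalently this is immediate from Remark \ref{rmkpi}, since $\eta_v$ and $\eta_u$ are the maps induced on the relevant cross-sections by $v$ and $u$, and $uv$ restricts to their composite.) Thus $P_{uv}=P_vP_u$ and $G$ is a covariant functor.

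All the computations here are short; the one point that genuinely needs care — the mild obstacle — is bookkeeping with the conventions: morphisms of $\mathcal{R}(T_X)$ are parametrised by elements $v\in fSe$, products in $T_X$ are read left to right, and $P_\eta$ reverses the direction of $\eta$, so one must keep track that every composite of arrows and every semigroup product is taken in the order asserted in Proposition \ref{pro1} and in equation \ref{eqnGP}.
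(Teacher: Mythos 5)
Your proof is correct and follows essentially the same route as the paper's: well-definedness on objects via Lemma \ref{tx}, on morphisms via Proposition \ref{pro1} together with Lemma \ref{lemv2}, and preservation of composition via the identity $P_vP_u = P_{uv}$. You are in fact slightly more thorough than the paper, which omits the explicit check that identities are preserved and simply asserts $\eta_u\eta_v = \eta_{uv}$ where you verify it on a class $yg^{-1}$.
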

\begin{proof}
Suppose $eS = fS$; then by lemma \ref{tx}, $\pi_e =\pi_{f}$. And hence $\bar{\pi_e} = \bar{\pi_f}$. Therefore $vG$ is well-defined. 
Now if $\lambda(e,v,f) = \lambda(g,u,h)$ then by Proposition \ref{pro1}, we get $e \mathscr{R} g$; and by lemma \ref{tx}, $\bar{\pi_e} = \bar{\pi_g}$. Similarly $\bar{\pi_f} = \bar{\pi_h}$. By Lemma \ref{lemv2} and Proposition \ref{pro1}, $\eta_v$ and $\eta_u$ are equal functions from $\pi_f$ to $\pi_{e}$. So $P_v = P_u$ and hence $G$ is well-defined on morphisms as well.\\
Now let $\lambda(e,v,f)$ and $\lambda(f,u,g)$ be two composable morphisms in $\mathcal{R}(T_X)$; then by Proposition \ref{pro1}, $G(\lambda(e,v,f)\lambda(f,u,g)) \:=\: G(\lambda(e,uv,g))$ $= \: P_{uv}$.\\
Also $G(\lambda(e,v,f))G(\lambda(f,u,g)) $ $= \: P_vP_{u}$ $= \: P_{uv}$.( Since $(\alpha)P_vP_{u} = (\eta_v\alpha)P_{u} = \eta_u\:\eta_v\alpha = \eta_{uv}\alpha = (\alpha)P_{uv} $ )\\
So $G(\lambda(e,v,f)\lambda(f,u,g)) \:=\: G(\lambda(e,v,f))G(\lambda(f,u,g)) $.\\ 
Hence $G$ is a covariant functor.
\end{proof}
\begin{lem} \label{lemG2}
$G$ is inclusion preserving.
\end{lem}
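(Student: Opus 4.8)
The plan is to show that $G$ carries each inclusion of $\mathcal{R}(T_X)$ to an inclusion of $\Pi(X)$ by matching up the explicit descriptions of inclusions on the two sides. So I would start from an inclusion $j(eS,fS)$, i.e.\ from idempotents $e,f$ with $eS\subseteq fS$, and first pin down $j(eS,fS)$ as a concrete morphism of $\mathcal{R}(T_X)$. From $eS\subseteq fS$ one has $e\in fS$, hence $e=fs$ for some $s$, so $fe=e$; equivalently $e=fee\in fSe$. A short computation (if $\lambda(e,u,f)$ with $u\in fSe$ acts as the identity map on $eS$, then $ue=e$, and $u=ue$ since $u\in fSe$, so $u=e$) then shows $j(eS,fS)=\lambda(e,e,f)$. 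Hence $G(j(eS,fS))=P_e$ by the definition of $G$ in \ref{eqnGP}.

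Next I would check that the source hypothesis $eS\subseteq fS$ translates on the target side to $\bar{\pi_e}\subseteq\bar{\pi_f}$. From $fe=e$ together with the fact that $(x,xf)\in\pi_f$ for every $x\in X$ (as $xff=xf$), one gets $xe=(xf)e=x(fe)$, so $\pi_f\subseteq\pi_e$; the converse implication holds by the same computation, and both are seen to be equivalent to $e\in fSe$ (this can also be read off Lemma \ref{tx}(2)). Thus $\bar{\pi_e}\le\bar{\pi_f}$ in $\Pi(X)$.

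Finally, invoking the description of inclusions in $\Pi(X)$ recorded just before Lemma \ref{lemG1} — namely that when $\pi_f\subseteq\pi_e$ (equivalently $e\in fSe$) the inclusion $\bar{\pi_e}\subseteq\bar{\pi_f}$ is precisely $P_e$ — I conclude $G(j(eS,fS))=P_e=j(\bar{\pi_e},\bar{\pi_f})=j(vG(eS),vG(fS))$, so $G$ is inclusion preserving. The actual computations are routine; the only point that needs care is the bookkeeping of the chain of equivalences $eS\subseteq fS\iff fe=e\iff e\in fSe\iff\pi_f\subseteq\pi_e$, since this is exactly what guarantees that the two a priori "chosen" inclusion morphisms $\lambda(e,e,f)$ and $P_e$ correspond under $G$ with no dependence on the choice of idempotent generators.
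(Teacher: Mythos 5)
Your proposal is correct and follows the same route as the paper: identify the inclusion $j(eS,fS)$ with $\lambda(e,e,f)$, apply $G$ to get $P_e$, and recognize $P_e$ as the inclusion $j(\bar{\pi_e},\bar{\pi_f})$ via the equivalence $eS\subseteq fS\iff\pi_f\subseteq\pi_e$ (Lemma \ref{tx}). You merely spell out details (that $u=e$ is forced, and the chain $fe=e\iff e\in fSe\iff\pi_f\subseteq\pi_e$) that the paper leaves implicit.
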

\begin{proof}
Suppose that $eS \subseteq fS$. Then by Lemma \ref{tx}, $\pi_{e} \supseteq \pi_{f} $ and hence $\bar{\pi_e} \leq \bar{\pi_f} $.\\
Also $G(j(eS,fS)) = G(\lambda(e,e,f)) = P_e$.\\
Now $P_e$ transforms each function in $\bar{\pi_e}$ as a function in $\bar{\pi_f}$. Hence $P_e$ is an inclusion in $\Pi(X)$ and $G$ is inclusion preserving.
\end{proof}
\begin{lem} \label{lemG2A}
$vG$ is an order isomorphism.
\end{lem}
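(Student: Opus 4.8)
The plan is to show that the vertex map $vG\colon v\mathcal{R}(T_X)\to v\Pi(X)$, which sends $eS$ to $\bar{\pi_e}$, is a bijection that both preserves and reflects the respective partial orders; since $\subseteq$ on $v\mathcal{R}(T_X)$ and $\le$ on $v\Pi(X)$ are both antisymmetric, it is enough to prove the biconditional $eS\subseteq fS\iff\bar{\pi_e}\le\bar{\pi_f}$ together with surjectivity of $vG$, for then injectivity comes for free. Note that $vG$ is well defined on objects by Lemma \ref{lemG1}, so $vG(eS)=\bar{\pi_e}$ does not depend on the chosen idempotent generator.

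First I would settle the biconditional. Chaining two identifications, one gets $\bar{\pi_e}\le\bar{\pi_f}\iff\pi_f\subseteq\pi_e$ (as equivalence relations on $X$) $\iff eS\subseteq fS$, the first equivalence being the definition of $\le$ on $\Pi(X)$ and the second being Lemma \ref{tx} --- precisely the step invoked, in one direction, in the proof of Lemma \ref{lemG2}, but here used as a genuine \emph{if and only if}. Given this, injectivity is automatic: if $\bar{\pi_e}=\bar{\pi_f}$ then $\bar{\pi_e}\le\bar{\pi_f}$ and $\bar{\pi_f}\le\bar{\pi_e}$, hence $eS\subseteq fS$ and $fS\subseteq eS$, so $eS=fS$.

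For surjectivity I would use the construction recalled just before this lemma. An arbitrary object of $\Pi(X)$ is $\bar{\pi}$ for some non-identity partition $\pi$ of $X$; choosing a cross-section $A$ of $\pi$ and letting $e\colon X\to X$ send each $x$ to the unique member of $A$ lying in $[x]_\pi$ gives an idempotent $e$, necessarily non-invertible since $\pi$ is non-identity, with $\pi_e=\pi$. Hence $e\in E(T_X)$ and $vG(eS)=\bar{\pi_e}=\bar{\pi}$, so $vG$ is onto $v\Pi(X)$. Together with the biconditional above, this shows $vG$ is an order isomorphism.

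The bulk of this is bookkeeping on top of Lemmas \ref{tx} and \ref{lemG2} and the facts about $T_X$ already set up; the only point that really needs attention is the order-reflecting half $\bar{\pi_e}\le\bar{\pi_f}\Rightarrow eS\subseteq fS$. This is not formally implied by ``$G$ is inclusion preserving'' (Lemma \ref{lemG2}), and it is exactly where the \emph{only if} direction of Lemma \ref{tx} is needed, to turn the inclusion $\pi_f\subseteq\pi_e$ of kernels back into an inclusion of principal right ideals.
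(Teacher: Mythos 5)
Your proof is correct and follows essentially the same route as the paper: the heart of the matter in both is the biconditional $eS\subseteq fS\iff\pi_f\subseteq\pi_e\iff\bar{\pi_e}\le\bar{\pi_f}$, obtained from both directions of Lemma \ref{tx}. The extra material on surjectivity and injectivity of $vG$ is sound but redundant here, since the paper establishes those separately in Lemmas \ref{lemG3} and \ref{lemG4}.
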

\begin{proof}
Suppose that $eS \subseteq fS$. Then by Lemma \ref{tx}, $\pi_{e} \supseteq \pi_{f} $ and hence $\bar{\pi_e} \leq \bar{\pi_f} $.
Conversely if $\bar{\pi_e} \leq \bar{\pi_f} $, then $\pi_{e} \supseteq \pi_{f} $ and by lemma \ref{tx}, $eS \subseteq fS$.\\
Hence $eS \subseteq fS \iff \bar{\pi_e} \leq \bar{\pi_f} $ and so $vG$ is an order isomorphism.
\end{proof}
\begin{lem} \label{lemG3}
$G$ is $v-$surjective and full.
\end{lem}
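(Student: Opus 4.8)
The plan is to establish the two assertions of Lemma~\ref{lemG3} separately, dealing with $v$-surjectivity first since the idempotents it produces are needed for fullness.

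\emph{$v$-surjectivity.} Recall from the discussion preceding Lemma~\ref{tx} that every non-identity partition $\pi$ of $X$ is of the form $\pi_e$ for some $e\in E(T_X)$: fix a cross-section $A$ of $\pi$ and let $e$ send each $x$ to the element of $A$ in $[x]_\pi$; then $e^2=e$ and $\pi_e=\pi$. Since the objects of $\Pi(X)$ are precisely the sets $\bar\pi$ with $\pi$ a non-identity partition and $vG(eS)=\bar{\pi_e}$ by equation~\ref{eqnGP}, this shows $vG$ is surjective. (Together with Lemma~\ref{lemG2A} it is in fact a bijection, though only surjectivity is needed here.)

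\emph{Fullness.} Let $P_\eta\colon\bar{\pi_1}\to\bar{\pi_2}$ be an arbitrary morphism of $\Pi(X)$, so that $\eta$ is a map $\pi_2\to\pi_1$. Using $v$-surjectivity, choose $e,f\in E(T_X)$ with $\pi_e=\pi_1$ and $\pi_f=\pi_2$; then $\text{Im }e$ and $\text{Im }f$ are cross-sections of $\pi_1$ and $\pi_2$ respectively. I would then exhibit $v\in fSe$ (with $S=T_X$) such that $\eta_v=\eta$, by defining $v\colon X\to X$ to send $x$ to the unique element of $\text{Im }e$ lying in the $\pi_e$-class $([x]_{\pi_f})\eta$. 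This value depends only on the $\pi_f$-class of $x$ and lies in $\text{Im }e$, so one checks directly that $fve=v$ and hence $v\in fSe$; moreover, since $e$ is a non-identity idempotent, $\text{Im }e\subsetneq X$, so $v$ is non-invertible and indeed lies in $T_X$. Finally, for $x\in\text{Im }f$ one has $[x]_{\pi_f}=xf^{-1}$, and $(xf^{-1})\eta_v=(xv)e^{-1}=(xf^{-1})\eta$ because $xv$ is by construction the representative of the class $(xf^{-1})\eta$ in $\text{Im }e$; thus $\eta_v=\eta$, whence $P_v=P_\eta$ and $G(\lambda(e,v,f))=P_v=P_\eta$ by equation~\ref{eqnGP}. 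Hence every morphism of $\Pi(X)$ lies in the image of $G$, i.e.\ $G$ is full. The existence of such a $v$ can alternatively be read off from the ``conversely'' clause of Remark~\ref{rmkpi}, once one confirms the induced transformation there can be taken inside $fSe$.

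The only point that calls for care is this last one: ensuring that the transformation realising $\eta$ can be chosen in $fSe$ rather than merely in $T_X$ — that is, constant on the $\pi_f$-classes with image contained in $\text{Im }e$. With the explicit formula for $v$ above this is immediate, so I do not anticipate a genuine obstacle; the argument is essentially bookkeeping resting on the realisation of partitions by idempotents and on Remark~\ref{rmkpi}.
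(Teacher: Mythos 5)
Your proof is correct and follows essentially the same route as the paper: $v$-surjectivity via the realisation of any non-identity partition as $\pi_e$ for an idempotent $e$, and fullness by producing $v\in fSe$ whose induced map on partitions is $\eta$. The only difference is that you write out explicitly the transformation $v$ (and verify $fve=v$ and $\eta_v=\eta$) where the paper simply appeals to the restriction of $\eta$ to the cross-sections $\text{Im }f$ and $\text{Im }e$ via Remark~\ref{rmkpi}; this is a welcome expansion rather than a deviation.
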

\begin{proof}
Let $\bar{\pi} \in \Pi(X) $. Then it is clear to see that there exists an $e \in E(T_X)$ such that $\pi = \pi_e$ and thus $G(eS) = \bar{\pi_e}$. Hence $G$ is $v$-surjective.\\
Now let $P_\eta$ be a function from $\bar{\pi_e}$ to $\bar{\pi_f}$ where ${\eta}$ is a function from $\pi_f$ to $\pi_e$. Then $v_{|\text{Im}f}$ can be chosen to be a function from Im $f$ to Im $e$ by restricting ${\eta}$ to the cross-sections Im $f$ and Im $e$ of the partitions $\pi_f$ and $\pi_e$ respectively such that $\eta = \eta_v$ then $G(\lambda(e,v,f)) = P_v = P_\eta$.\\
Hence $G$ is full. 
\end{proof}
\begin{lem} \label{lemG4}
$G$ is $v$-injective and faithful.
\end{lem}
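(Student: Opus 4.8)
The plan is to treat $v$-injectivity and faithfulness separately, since each reduces quickly to lemmas already in hand. For $v$-injectivity, suppose $vG(eS) = vG(fS)$, that is, $\bar{\pi_e} = \bar{\pi_f}$. Then $\bar{\pi_e} \le \bar{\pi_f}$ and $\bar{\pi_f} \le \bar{\pi_e}$, so Lemma \ref{lemG2A} gives $eS \subseteq fS$ and $fS \subseteq eS$, hence $eS = fS$. (Equivalently, $\bar{\pi_e} = \bar{\pi_f}$ forces $\pi_e = \pi_f$ as equivalence relations, so $e \mathscr{R} f$ by Lemma \ref{tx} and $eS = fS$.) Thus $vG$ is injective.

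For faithfulness the essential observation is that a morphism $P_\eta$ of $\Pi(X)$ remembers the underlying map $\eta$ between partitions, because each source object $\bar\pi$ contains injective functions. Concretely, suppose $G(\lambda(e,v,f)) = G(\lambda(e',u,f'))$, i.e. $P_v = P_u$. Equating sources and targets gives $\bar{\pi_e} = \bar{\pi_{e'}}$ and $\bar{\pi_f} = \bar{\pi_{f'}}$, and the equality $P_v = P_u$ unwinds to $\eta_v\alpha = \eta_u\alpha$ for every $\alpha \in \bar{\pi_e}$. Now I would choose a cross-section $A$ of $\pi_e$; the resulting bijection $\pi_e \to A \subseteq X$ is an injective element $\alpha_0$ of $\bar{\pi_e}$, and since $\alpha_0$ is injective, cancelling it from $\eta_v\alpha_0 = \eta_u\alpha_0$ yields $\eta_v = \eta_u$ as functions $\pi_f \to \pi_e$.

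Once $\eta_v = \eta_u$ is established, the rest is bookkeeping. Applying Lemma \ref{lemv1} (with $v \in fSe$ and $u \in f'Se'$) gives $\pi_f = \pi_{f'}$, $\pi_e = \pi_{e'}$ and, crucially, $v = ue$. By Lemma \ref{tx}, $\pi_e = \pi_{e'}$ gives $e \mathscr{R} e'$ and $\pi_f = \pi_{f'}$ gives $f \mathscr{R} f'$; combining these with $v \in fSe$, $u \in f'Se'$ and $v = ue$, Proposition \ref{pro1} yields $\lambda(e,v,f) = \lambda(e',u,f')$. Hence $G$ is faithful.

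I expect the cancellation step — extracting $\eta_v = \eta_u$ from $P_v = P_u$ — to be the only place where a genuine idea is needed; it hinges on the easy but indispensable fact that every $\bar\pi$ contains an injective function, supplied by a choice of cross-section. Everything else is a direct invocation of Lemmas \ref{tx}, \ref{lemv1} and \ref{lemG2A} and of Proposition \ref{pro1}. (Together with Lemmas \ref{lemG1}, \ref{lemG2}, \ref{lemG2A} and \ref{lemG3}, this lemma completes the proof that $G$ is an inclusion-preserving order isomorphism of $\mathcal{R}(T_X)$ onto $\Pi(X)$.)
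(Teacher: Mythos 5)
Your proof is correct and follows essentially the same route as the paper: equality of objects $\bar{\pi_e}=\bar{\pi_f}$ forces $\pi_e=\pi_f$ and hence $eS=fS$ by Lemma \ref{tx}, while equality of morphisms reduces to $\eta_v=\eta_u$, after which Lemma \ref{lemv1} and Proposition \ref{pro1} give $\lambda(e,v,f)=\lambda(e',u,f')$. The only difference is that you explicitly justify the step $P_v=P_u\Rightarrow\eta_v=\eta_u$ by cancelling an injective $\alpha_0$ obtained from a cross-section, a detail the paper leaves implicit.
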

\begin{proof}
Let $G(eS)\:=\:G(fS) $ in $\Pi(X)$. That is $\bar{\pi_e} \: = \: \bar{\pi_f}$ and hence $\pi_e = \pi_f$. And by lemma \ref{tx}, $eS \:=\: fS$ and $G$ is $v$-injective.\\
Now let $G(\lambda(e,v,f)) = G(\lambda(g,u,h))$ in $\Pi(X)$. Then $P_v = P_{u} $ .i.e $\pi_{e} = \pi_{g}$, $\pi_f = \pi_h$ and $\eta_{v} = \eta_{u}$ .i.e $eS = gS$, $fS = hS$ and by lemma \ref{lemv1}, $v = ue$. Hence $\lambda(e,v,f) = \lambda(g,u,h)$ and $G$ is faithful. 
\end{proof}
\begin{thm} \label{thmpix}
$\mathcal{R}(T_X) $ is isomorphic to $\Pi(X)$ as normal categories.
\end{thm}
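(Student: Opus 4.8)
The plan is simply to assemble the chain of lemmas already established about the functor $G\colon \mathcal{R}(T_X)\to\Pi(X)$ of equation~\eqref{eqnGP}. Recall that, by definition, a functor is an isomorphism once it is $v$-injective, $v$-surjective, full and faithful; and to upgrade this to an isomorphism \emph{of normal categories} one needs in addition that $G$ carry the subobject order of $\mathcal{R}(T_X)$ onto that of $\Pi(X)$ and send inclusions to inclusions. So the proof will consist of checking that all of these conditions have in fact been verified.

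First I would note that Proposition~\ref{pro1} and Proposition~\ref{propix} guarantee that $\mathcal{R}(T_X)$ and $\Pi(X)$ are both normal categories, so the statement is meaningful. Next, Lemma~\ref{lemG1} gives that $G$ is a well-defined covariant functor --- the substantive content there being that $\lambda(e,v,f)=\lambda(g,u,h)$ forces $P_v=P_u$, which follows from Lemma~\ref{lemv2} and the description of morphism equality in $\mathcal{R}(S)$ from Proposition~\ref{pro1}. Lemmas~\ref{lemG2} and~\ref{lemG2A} then give that $G$ is inclusion preserving and that $vG$ is an order isomorphism; Lemma~\ref{lemG3} gives $v$-surjectivity and fullness; and Lemma~\ref{lemG4} gives $v$-injectivity and faithfulness. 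Putting these together, $G$ is a $v$-bijective, full, faithful, inclusion-preserving functor whose object map is an order isomorphism, hence an isomorphism of normal categories, which is the assertion of the theorem.

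There is essentially no remaining obstacle: the real work was carried out in Lemmas~\ref{lemv}--\ref{lemG4}, where the correspondence $v\mapsto\eta_v$ (equivalently, the choice of a cross-section and the translation between maps on partitions and maps on those cross-sections in Remark~\ref{rmkpi}) had to be shown to be compatible with composition, with $\mathscr{R}$-equivalence of idempotents, and with the equality relation on left translations. If I were to single out the delicate point it would be faithfulness, where one must recover $v=ue$ from the equality $\eta_v=\eta_u$ (Lemma~\ref{lemv1}); but since that is already in hand, the proof of the theorem itself is a one-line deduction from the cited lemmas.
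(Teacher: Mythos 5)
Your proposal is correct and follows exactly the same route as the paper: the paper's proof of Theorem~\ref{thmpix} is likewise a one-line assembly of Lemmas~\ref{lemG1}, \ref{lemG2}, \ref{lemG2A}, \ref{lemG3} and~\ref{lemG4}, concluding that $G$ is an inclusion-preserving, order-isomorphic, $v$-bijective, fully faithful covariant functor. Your identification of Lemma~\ref{lemv1} (recovering $v=ue$ from $\eta_v=\eta_u$) as the delicate ingredient matches where the substantive work actually lies.
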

\begin{proof}
By the previous lemmas \ref{lemG1}, \ref{lemG2}, \ref{lemG2A}, \ref{lemG3}, \ref{lemG4} ; $G$ is an inclusion preserving covariant functor from $\mathcal{R}(T_X) $ to $\Pi(X)$ which is an order isomorphism, $v$-injective, $v$-surjective and fully-faithful.\\
Hence the theorem.
\end{proof}

\section{The normal dual of the power-set category}
Now we proceed to characterize the normal dual associated with the normal category $\mathcal{L}(  T_X )$. This dual is a normal category whose objects are $H$ functors and morphisms are natural transformations between the $H$ functors(see equations \ref{eqnH}, \ref{eqnH1}, \ref{eqnH2}). For that, we need the following characterization of $\mathcal{L}(  T_X )$.
\begin{thm}(cf. \cite{ltx} )\label{thm4}
The power-set category $\mathscr{P}(X)$ of a set $X$ is the category of proper subsets of a set $X$ with functions as morphisms. It is a normal category and $\mathcal{L}(T_X) $ is isomorphic to $\mathscr{P}(X)$ as normal categories.
\end{thm}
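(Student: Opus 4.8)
The plan is to build an explicit functor $F:\mathcal{L}(T_X)\to\mathscr{P}(X)$ and verify it is an isomorphism of normal categories, following closely the proof of Theorem~\ref{thmpix} but with images of transformations in place of partitions. Since $\mathscr{P}(X)$ is a normal category and $\mathcal{L}(T_X)$ is a normal category by the left--right dual of Proposition~\ref{pro1} (the description of $\mathcal{L}(S)$ in \cite{cross}), it suffices to produce an inclusion preserving covariant functor $F$ which is $v$-injective, $v$-surjective, full and faithful and whose object map is an order isomorphism; then $F$ is an isomorphism of normal categories. I would use throughout Lemma~\ref{tx}(1): for $\alpha,\beta\in T_X$ one has $\alpha\mathscr{L}\beta\iff X\alpha=X\beta$ and, more generally, $S\beta\subseteq S\alpha\iff X\beta\subseteq X\alpha$; I would also use the standard fact that each proper subset $Y\subsetneq X$ equals $Xe$ for some idempotent $e\in T_X$, namely a retraction of $X$ onto $Y$.

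On objects put $vF(Se)=Xe$. By Lemma~\ref{tx}(1) this is well defined, $Se\subseteq Sf\iff Xe\subseteq Xf$, and $vF$ is onto $v\mathscr{P}(X)$, so $vF$ is an order isomorphism. A morphism of $\mathcal{L}(T_X)$ is a right translation $\rho(e,s,f):Se\to Sf$ with $s\in eSf$; from $s=esf$ one gets $es=s$ and $Xs\subseteq Xf$, so the restriction $s_{|Xe}$ is a well-defined function $Xe\to Xf$, and I would set $F(\rho(e,s,f))=s_{|Xe}$. Since $ef=e$ whenever $Se\subseteq Sf$, the canonical inclusion $j(Se,Sf)=\rho(e,e,f)$ is sent to $e_{|Xe}=\mathrm{id}_{Xe}$ regarded as the inclusion $Xe\hookrightarrow Xf$, so $F$ preserves inclusions; and the dual composition rule $\rho(e,s,f)\rho(f,t,g)=\rho(e,st,g)$ together with $(st)_{|Xe}=s_{|Xe}\,t_{|Xf}$ (composition read left to right, as throughout the paper) shows that $F$ is a covariant functor.

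The substantive part is well-definedness of $F$ on morphisms together with fullness and faithfulness, which is the left--right dual of Lemmas~\ref{lemv}, \ref{lemv1} and~\ref{lemv2}. I would prove: for $s\in eSf$ and $t\in gSh$, the functions $s_{|Xe}$ and $t_{|Xg}$ coincide (with the same domain and codomain) if and only if $Xe=Xg$, $Xf=Xh$ and $s=et$. Combined with the dual of Proposition~\ref{pro1} --- that $\rho(e,s,f)=\rho(g,t,h)$ iff $e\mathscr{L}g$, $f\mathscr{L}h$ and $s=et$ --- this yields simultaneously that $F$ is well defined on morphisms and that it is faithful; $v$-injectivity is then immediate from $Xe=Xf\Rightarrow Se=Sf$. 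For fullness, given a function $g:Y\to Z$ between proper subsets of $X$, I would pick idempotents $e,f\in E(T_X)$ with $Xe=Y$ and $Xf=Z$, define $s\in T_X$ by $xs=(xe)g$, check that $s=esf\in eSf$ and $s_{|Xe}=g$, whence $F(\rho(e,s,f))=g$. The main obstacle is exactly this dual bookkeeping --- showing that a right translation $\rho(e,s,f)$ contributes precisely the datum $s_{|Xe}$ to $\mathscr{P}(X)(Xe,Xf)$ and that nothing is lost or added; once the three dual lemmas are in hand, assembling them into ``$F$ is an isomorphism of normal categories'' is formal, exactly as in the proof of Theorem~\ref{thmpix}.
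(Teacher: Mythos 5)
The paper itself gives no proof of this theorem: it is quoted verbatim from the reference \cite{ltx} (``Normal categories from the transformation semigroup''), so there is no in-paper argument to compare against line by line. That said, your proposal is exactly the expected argument --- it is the precise left--right dual of the proof of Theorem~\ref{thmpix} carried out in this paper for $\mathcal{R}(T_X)\cong\Pi(X)$, with $Se\mapsto Xe$ and $\rho(e,s,f)\mapsto s_{|Xe}$ playing the roles of $eS\mapsto\bar{\pi_e}$ and $\lambda(e,v,f)\mapsto P_v$, and the three dual lemmas you isolate (well-definedness, the equality criterion $s_{|Xe}=t_{|Xg}\iff Xe=Xg,\ Xf=Xh,\ s=et$, and the surjectivity of $s\mapsto s_{|Xe}$ onto $\mathscr{P}(X)(Xe,Xf)$) are the right dual bookkeeping and all check out: since $s=esf$ one has $es=s$ and $Xs\subseteq Xf$, idempotents fix their images pointwise, and $Xs\subseteq Xf\subsetneq X$ guarantees the transformation $xs=(xe)g$ you build for fullness is non-invertible and hence lies in $T_X$.

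The one genuine gap is at the very start: the theorem asserts, as part of its statement, that $\mathscr{P}(X)$ \emph{is} a normal category, and you simply posit this (``Since $\mathscr{P}(X)$ is a normal category\dots it suffices to\dots''). You need either a short direct verification --- inclusions of subsets make $\mathscr{P}(X)$ a category with subobjects; a function $f:A\to B$ normally factorizes as a retraction of $A$ onto a cross-section $A'$ of $\ker f$, followed by the bijection $A'\to \mathrm{Im}\,f$, followed by the inclusion $\mathrm{Im}\,f\subseteq B$; and restriction of a fixed $\alpha\in T_X$ gives a normal cone with vertex $X\alpha$ --- or an explicit remark that normality transfers from $\mathcal{L}(T_X)$ across your inclusion-preserving, order-isomorphic, fully faithful $v$-bijective functor. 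Either route is routine, but as written the proposal assumes a clause of the theorem. A second, cosmetic point: you should note that the objects match up exactly because $e$ ranges over idempotents of the semigroup of \emph{non-invertible} maps, so $Xe$ ranges over precisely the (nonempty) proper subsets of $X$, which is what $v\mathscr{P}(X)$ consists of.
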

All the normal cones in $\mathscr{P}(X)$ are principal cones(cf. \cite{ltx}) and so the cones can also be represented by $\alpha \in T_X$ by identifying as follows. For $\alpha \in T_X$ and $\rho^\alpha \in T\mathscr{P}(X)$, $\rho^\alpha(A)\:=\: \alpha_{|A}$ for $A \in \mathscr{P}(X)$. Consequently each cone $\rho^\alpha$ can be represented by $\alpha$ and hence the $H$ functors in $\mathscr{P}(X)$ can be represented as $H(e;-)$. 

\begin{lem}\label{lem2}
Let $e \in  T_X $ and $A\subseteq X$. Then
$$ H (e ; {A}) = \{  a \in T_X \: : \: \pi_a \supseteq \pi_e \text{ and Im }a\subseteq A  \}$$
\end{lem}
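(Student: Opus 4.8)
The plan is to unwind the definition of the $H$-functor of equations~(\ref{eqnH}) inside the category $\mathscr{P}(X)$ (using $\mathscr{P}(X)\cong\mathcal{L}(T_X)$ from Theorem~\ref{thm4}), translate it into a statement about transformations through the identification of a normal cone $\rho^\alpha$ with $\alpha\in T_X$, and then verify the two set inclusions. The principal cone $\rho^e$ has vertex $Se$ in $\mathcal{L}(T_X)$ (Proposition~\ref{pro2}), which corresponds to $\mathrm{Im}\,e$ in $\mathscr{P}(X)$; hence
$$ H(e;A)=\{\,\rho^e\ast g^\circ \;:\; g\in\mathscr{P}(X)(\mathrm{Im}\,e,\,A)\,\}, $$
where $g$ ranges over the functions $\mathrm{Im}\,e\to A$ and $g^\circ:\mathrm{Im}\,e\to\mathrm{Im}\,g$ denotes its epimorphic part.

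The crux is to compute $\rho^e\ast g^\circ$ as a concrete transformation. From $(\rho^e\ast g^\circ)(B)=\rho^e(B)\,g^\circ=(e_{|B})\,g^\circ$ for every object $B$, together with the fact (recalled just before Lemma~\ref{lem2}) that every normal cone in $\mathscr{P}(X)$ is principal, I expect to obtain $\rho^e\ast g^\circ=\rho^{a}$, where $a\in T_X$ is the transformation determined by $xa=(xe)g$ (evaluating on singletons pins $a$ down). Hence, under the identification, $H(e;A)$ is exactly the set of transformations $a$ that factor as $a=eg$ for some function $g$ with domain $\mathrm{Im}\,e$ and values in $A$. The points needing care are: reading off the vertex of $\rho^e$ correctly through the two isomorphisms; observing $a\in T_X$; and noticing that $g^\circ$ and $g$ coincide as partial maps, so replacing one with the other is harmless.

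It then remains to check the set equality. For the inclusion $\subseteq$, if $a=eg$ with $\mathrm{Im}\,g\subseteq A$ then $\mathrm{Im}\,a\subseteq A$ is immediate, and $(x,y)\in\pi_e$ forces $xe=ye$, hence $xa=(xe)g=(ye)g=ya$, so $\pi_e\subseteq\pi_a$; this is also a direct consequence of Lemma~\ref{tx}. For the reverse inclusion, given $a\in T_X$ with $\pi_a\supseteq\pi_e$ and $\mathrm{Im}\,a\subseteq A$, define $g:\mathrm{Im}\,e\to A$ by picking for each $y\in\mathrm{Im}\,e$ a preimage $x$ of $y$ under $e$ and setting $yg=xa$; this is well defined precisely because $\pi_e\subseteq\pi_a$, has values in $\mathrm{Im}\,a\subseteq A$, and satisfies $x(eg)=(xe)g=xa$ for all $x\in X$, so $a=eg\in H(e;A)$. (Alternatively, Lemma~\ref{tx} already gives $a=eg$ for some $g\in T_X$, and one restricts $g$ to $\mathrm{Im}\,e$.)

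I expect the single real obstacle to be the bookkeeping in the middle step, namely pushing the abstract product $\rho^e\ast g^\circ$ through the isomorphism $\mathcal{L}(T_X)\cong\mathscr{P}(X)$ and the cone-transformation correspondence and confirming that it is the principal cone of $x\mapsto(xe)g$. Once this identification is in place, the equality of the two sets is a short verification, which can moreover be short-circuited using the divisibility description of $\mathscr{R}$ in Lemma~\ref{tx}.
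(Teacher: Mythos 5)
Your proposal is correct and follows essentially the same route as the paper: both unwind the definition $H(e;A)=\{e\ast f^\circ : f:\mathrm{Im}\,e\to A\}$, identify $e\ast f^\circ$ with the transformation $ef^\circ=eg$ (so that $\mathrm{Im}\,a\subseteq A$ and $\pi_a\supseteq\pi_e$ follow from Lemma~\ref{tx}), and prove the reverse inclusion by exhibiting, for each such $a$, a map $g:\mathrm{Im}\,e\to A$ with $a=eg$ (the paper uses $a=ea$ and restricts $a$ to $\mathrm{Im}\,e$, which is the canonical choice of your preimage construction). Your treatment is only slightly more explicit about pushing the cone product through the identification $\rho^\alpha\leftrightarrow\alpha$, a step the paper performs tacitly.
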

\begin{proof}
By definition of $H$ functor ( equation \ref{eqnH}), we have
$$ H (e ; {A}) = \{ e \ast f^\circ \: :\: f : \text{Im } e \to A \} $$
Since $f: \text{ Im }e \to A$ is a morphism in the category $\bf{Set}$, $g\: := \: f^\circ$ will be a surjective morphism from Im $e \to$ Im $f \subseteq A$. Therefore  $ H (e ; {A}) = \{ e \ast g  \: : \: g:\text{ Im }e \to \text{ Im }f  \text{ is a surjection}\} = \{ u \: : \: u =eg  \} $. Since $u =eg$, Im $u = \text{ Im } eg \subseteq \text{ Im }g = \text{ Im }f \subseteq A$ .ie $\text{ Im }u \subseteq A$. Also as $u=eg$, by Lemma \ref{tx}, $\pi_u \supseteq \pi_e$. Thus $e \ast f^\circ\: = \: u \: \text{ where } \: \pi_u \supseteq \pi_e \text{ and Im }u\subseteq A$. \\
Conversely $a$ be such that $\pi_a \supseteq \pi_e \text{ and Im }a\subseteq A $. Then $a \: = \:{ea}$ (since $ea = a$ by lemma \ref{tx}). Take $h: =j(\text{ Im } a, A)$ and let $f=ah$; then $f^\circ = a$. Hence $a = e \ast f^\circ \text{ where } f: \text{Im } e \to A$.\\
Hence the lemma.
\end{proof}
\begin{lem}\label{lem3}
If $g : {A}\to{B} $ then $ H (e ; g ) : H(e;{A})  \mapsto  H(e;{B})  $.
\end{lem}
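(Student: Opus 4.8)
The plan is to read off the action of $H(e;g)$ from the general formula in equation \ref{eqnH} and then match it against the concrete description of the objects $H(e;A)$ and $H(e;B)$ furnished by Lemma \ref{lem2}. By definition, $H(e;g)$ sends $e\ast f^{\circ}$ to $e\ast(fg)^{\circ}$ for each morphism $f\colon\text{Im}\,e\to A$ of $\mathscr{P}(X)$, so the first task is to recognise both sides as elements of $T_X$. Recall that every cone of $\mathscr{P}(X)$ is principal, so a cone is identified with the transformation $\alpha\in T_X$ satisfying $\rho^{\alpha}(C)=\alpha_{|C}$; under this identification, and exactly as in the proof of Lemma \ref{lem2}, $a=e\ast f^{\circ}$ corresponds to the composite $ef^{\circ}\in T_X$, where $f^{\circ}\colon\text{Im}\,e\to\text{Im}\,f$ is the surjective corestriction of $f$. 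The composite $fg\colon\text{Im}\,e\to B$ has $(fg)^{\circ}\colon\text{Im}\,e\to\text{Im}(fg)\subseteq B$ for its epimorphic part, and a short computation shows that $e\ast(fg)^{\circ}$ corresponds to $e\,(fg)^{\circ}$, which is just the composite $ag$ of the transformation $a$ with $g$. Thus $H(e;g)$ is the assignment $a\mapsto ag$.

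With this identification the codomain claim is immediate. Let $a\in H(e;A)$; by Lemma \ref{lem2} we have $\pi_a\supseteq\pi_e$ and $\text{Im}\,a\subseteq A$, and we put $b:=H(e;g)(a)=ag$. Then $\text{Im}\,b=(\text{Im}\,a)g\subseteq Ag\subseteq B$ since $g$ is a morphism $A\to B$. Moreover $b$ is obtained from $a$ by postcomposition with $g$, so $x\,\pi_a\,y$ forces $xa=ya$, hence $(xa)g=(ya)g$, i.e. $x\,\pi_b\,y$; therefore $\pi_b\supseteq\pi_a\supseteq\pi_e$ (equivalently, one may invoke Lemma \ref{tx}). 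Applying Lemma \ref{lem2} once more gives $b\in H(e;B)$, which is exactly the assertion $H(e;g)\colon H(e;A)\to H(e;B)$.

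It remains only to remark that the assignment $e\ast f^{\circ}\mapsto e\ast(fg)^{\circ}$ is well defined: this is already part of the definition of the $H$-functor in equation \ref{eqnH} (cf. \cite{cross}), and it is also visible here since the value $ag$ depends on $a$ alone and not on the particular $f$ with $a=ef^{\circ}$. The only step that needs genuine care is the bookkeeping in the first paragraph --- unwinding the $\ast$-operation together with the cone-to-transformation identification, and checking that passing to epimorphic parts disturbs neither images nor kernels; granting that, the lemma follows at once from Lemmas \ref{lem2} and \ref{tx}.
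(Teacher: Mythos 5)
Your proof is correct and follows essentially the same route as the paper: identify the action of $H(e;g)$ with post-composition $a\mapsto ag$ via the cone-to-transformation identification, then check $\operatorname{Im}(ag)\subseteq B$ and $\pi_{ag}\supseteq\pi_a\supseteq\pi_e$ (the paper cites Lemma \ref{tx} for the latter) and conclude by Lemma \ref{lem2}. Your image computation $\operatorname{Im}(ag)=(\operatorname{Im}a)g\subseteq Ag\subseteq B$ is in fact stated more carefully than the paper's.
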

\begin{proof}
If $g : {A}\to{B} $ and $ H (e ; {A}) = \{  a \: : \: \pi_a \supseteq \pi_e \text{ and Im }a\subseteq A  \}$. Now $ (a) H (e ; g ) =  a \ast g^\circ $(by equation \ref{eqnH}). Let $h\: := \: g^\circ :$ $A \to$ Im $g \subseteq B$. Let $ah =b $ and as $ h:A \to$ Im $g$ , Im $b =$ Im $ah =$ Im $h = $ Im $g \subseteq B$ .i.e Im $b \subseteq B$. And since $ah=b$, by Lemma \ref{tx}, $\pi_a \subseteq \pi_b$; so $\pi_b \supseteq \pi_e$. So $(a) H (e ; g ) \subseteq \{  b \: : \: \pi_b \supseteq \pi_e \text{ and Im }b\subseteq B  \}$.\\
And by the previous lemma, $\{  b \: : \: \pi_b \supseteq \pi_e \text{ and Im }b\subseteq B  \} = H(e;B)$. Hence the lemma.
\end{proof}
Thus by Lemma \ref{lem2} and Lemma \ref{lem3}, we know that the $H$ functor $ H (e ;-)$ in $T_X$ is completely determined by the partition of $e$.\\
Now we proceed to show that there is a normal category isomorphism between $N^\ast \mathscr{P}(X)$ and $\Pi(X)$.
For that first we need to know the morphisms in $N^\ast \mathscr{P}(X)$. Note that these morphisms are natural transformations.
\begin{lem}\label{lem4} 
Let $\sigma: H(e;-) \to H(f;-)$ be a morphism in $N^\ast \mathscr{P}(X)$. Then the component of the natural transformation $\sigma$ is the map $$ \sigma({C}) : \{  a \: : \: \pi_a \supseteq \pi_e \text{ and Im }a\subseteq C  \} \to \{  b \: : \: \pi_b \supseteq \pi_{f} \text{ and Im }b\subseteq C  \}$$ given by $\sigma({C}) : a \mapsto va$ for $v \in fSe$.
\end{lem}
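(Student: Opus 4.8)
The plan is to apply Theorem \ref{thm1} to the natural transformation $\sigma$ and then rewrite the resulting formula inside $T_X$ by means of Lemma \ref{lem2}. Since all normal cones in $\mathscr{P}(X)$ are principal, the objects of $N^\ast\mathscr{P}(X)$ are exactly the functors $H(e;-)$ with $e\in E(T_X)$, and for such $e$ the vertex of the principal cone $\rho^e$ is $c_{\rho^e}=\operatorname{Im}e$ (this is visible directly from $\rho^e(A)=e_{|A}$, or one may transport Proposition \ref{pro2} across the isomorphism $\mathcal{L}(T_X)\cong\mathscr{P}(X)$ of Theorem \ref{thm4}). Hence, applied to $\sigma\colon H(e;-)\to H(f;-)$, Theorem \ref{thm1} produces a unique morphism $\hat\sigma\colon\operatorname{Im}f\to\operatorname{Im}e$ of $\mathscr{P}(X)$ such that, for every proper subset $C$ of $X$, the component $\sigma(C)$ sends $e\ast g^\circ$ to $f\ast(\hat\sigma g)^\circ$ as $g$ ranges over the functions $\operatorname{Im}e\to C$.

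Next I would produce the transformation $v$. Writing $S=T_X$, define $v\in S$ by $xv=(xf)\hat\sigma$ for $x\in X$, which makes sense because $xf\in\operatorname{Im}f=\operatorname{dom}\hat\sigma$. Idempotency of $f$ gives $fv=v$, and since $\operatorname{Im}v\subseteq\operatorname{Im}e$ and $e$ is idempotent we also get $ve=v$; thus $v=fve\in fSe$ and $v_{|\operatorname{Im}f}=\hat\sigma$. By Lemma \ref{lem2}, $H(e;C)=\{a:\pi_a\supseteq\pi_e,\ \operatorname{Im}a\subseteq C\}$, the analogous description holds for $f$, and (as in the proof of that lemma) an element of $H(e;C)$ written $e\ast g^\circ$ with $g\colon\operatorname{Im}e\to C$ equals, as an element of $T_X$, the transformation $u:=eg^\circ$. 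The one computation that matters is that
\[
f\ast(\hat\sigma g)^\circ=vu\qquad\text{in }T_X,
\]
and this is checked pointwise: for $x\in X$ the left side sends $x$ to $(xf)(\hat\sigma g)=((xf)\hat\sigma)g$, while $xvu=x\,v\,e\,g^\circ=(xv)g^\circ=((xf)\hat\sigma)g$, where we used $xv\in\operatorname{Im}e$ (so that $e$ fixes it) and that $g^\circ$ agrees with $g$ as a function on $\operatorname{Im}e$. Consequently $\sigma(C)$ is the map $u\mapsto vu$ on $H(e;C)$; and since $f(vu)=(fv)u=vu$ and $\operatorname{Im}vu\subseteq\operatorname{Im}u\subseteq C$ we have $vu\in H(f;C)$, in agreement with the stated codomain.

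The difficulty here is entirely bookkeeping. One must respect the left-to-right composition convention for transformations, keep track of the identification of a cone $\rho^\alpha$ in $\mathscr{P}(X)$ with $\alpha\in T_X$ (and hence of $c_{\rho^\alpha}$ with $\operatorname{Im}\alpha$), and use repeatedly that replacing a function by its epimorphic part only shrinks the codomain and is therefore invisible once everything is pushed into $T_X$. The only conceptual point requiring a remark is the well-definedness of the rule $a\mapsto va$---that it does not depend on the chosen representation $a=e\ast g^\circ$---but this is automatic, since $va$ has just been identified with the value $\sigma(C)(a)$ supplied by Theorem \ref{thm1}, which is well defined by construction.
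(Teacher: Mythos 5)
Your proposal is correct and follows essentially the same route as the paper: apply Theorem \ref{thm1} to obtain the unique $\hat\sigma\colon \operatorname{Im}f\to\operatorname{Im}e$, then use Lemma \ref{lem2} (and the identification of cones with transformations) to read off the component as $a\mapsto va$. The only difference is that you spell out details the paper leaves implicit --- the explicit extension of $\hat\sigma$ to $v=fve\in fSe$ and the pointwise verification that $f\ast(\hat\sigma g)^\circ=vu$ --- which the paper dismisses with ``as argued in the proofs of Lemmas \ref{lem2}, \ref{lem3}.''
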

\begin{proof}
By Lemma \ref{lem2}, the $H(e; {C})\:=\:\{  a \: : \: \pi_a \supseteq \pi_e \text{ and Im }a\subseteq C  \}$. And given $\sigma$ between $ H(e;-) $ and $ H({f} ;-)$, by theorem \ref{thm1}, there exists a unique $\hat{\sigma}: \text{ Im }f \to \text{ Im }e$, say $v$ such that $e \ast h^\circ$ gets mapped to ${f} \ast (vh)^\circ$(See equation \ref{eqnH2}). As in the Lemmas \ref{lem2}, \ref{lem3}, we see that if $b = {f} \ast (vh)^\circ$, Im $b \subseteq \text{ Im } h^\circ \subseteq C$ . And since $\pi_{b} \supseteq \pi_{f}$, hence the lemma.\\
Also observe that as argued in the proofs of lemmas \ref{lem2}, \ref{lem3}; $\sigma(C):H(e;C) \to H({f} ;C)$ is a mapping $a \mapsto va$ for $v \in fSe$.
\end{proof}
Now we proceed to show that $N^\ast \mathscr{P}(X)$ is isomorphic to $\Pi(X)$ as normal categories.\\
Define a functor $P : N^\ast \mathscr{P}(X) \to \Pi(X) $ as
\begin{equation} \label{eqnP}
vP (H(e;-)) = \bar{\pi_e} \quad\text{and}  \quad
P(\sigma) = P_v
\end{equation}
where $\sigma: H(e;-) \to H(f;-)$ and $v = \hat{\sigma}: \text{ Im }f \to \text{ Im }e$ and $P_v : \bar{\pi_e} \to \bar{\pi_f} $. \\
Now we show that $P$ is a covariant functor.\\
\begin{lem} \label{lemP1}
$P$ as defined in equation \ref{eqnP} is a well defined covariant functor from $N^\ast \mathscr{P}(X)$ to $\Pi(X)$.
\end{lem}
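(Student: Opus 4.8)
The plan is to verify, in order, that $vP$ is well defined on objects, that $P$ is well defined on morphisms, and that $P$ respects composition and identities. For the objects, Lemma \ref{lem2} expresses $H(e;A)=\{a:\pi_a\supseteq\pi_e,\ \text{Im }a\subseteq A\}$ purely in terms of $\pi_e$, so $\pi_e=\pi_f$ gives $H(e;-)=H(f;-)$; conversely, distinct partitions yield distinct such sets for a suitably large $A$ (the coarsest $\pi_a$ occurring in $H(e;A)$ is $\pi_e$), so $H(e;-)=H(f;-)\iff\pi_e=\pi_f\iff\bar{\pi_e}=\bar{\pi_f}$. Hence $vP(H(e;-))=\bar{\pi_e}$ is independent of the chosen representative $e$, and since $e\in E(T_X)$ the partition $\pi_e$ is non-identity, so $\bar{\pi_e}\in v\Pi(X)$.

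For the morphisms, given $\sigma:H(e;-)\to H(f;-)$ Theorem \ref{thm1} supplies a unique $\hat\sigma:\text{Im }f\to\text{Im }e$; by Remark \ref{rmkpi} we may choose $v\in fSe$ with $v_{|\text{Im }f}=\hat\sigma$. Since $\eta_v$, and hence $P_v:\bar{\pi_e}\to\bar{\pi_f}$, depends only on $v_{|\text{Im }f}$, the rule $P(\sigma)=P_v$ does not depend on this extension; and if $\sigma=\sigma'$ as natural transformations then $\hat\sigma=\hat{\sigma'}$ by the uniqueness in Theorem \ref{thm1}, so $P(\sigma)=P(\sigma')$. The directions are consistent: a morphism $H(e;-)\to H(f;-)$ is sent to $P_v:\bar{\pi_e}\to\bar{\pi_f}$.

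To check functoriality, take composable $\sigma:H(e;-)\to H(f;-)$ and $\tau:H(f;-)\to H(g;-)$ with $v=\hat\sigma\in fSe$ and $u=\hat\tau\in gSf$. By Lemma \ref{lem4}, for every $C$ the component of $\sigma$ at $C$ is $a\mapsto va$ and that of $\tau$ at $C$ is $b\mapsto ub$, so the component of $\sigma\circ\tau$ at $C$ is $a\mapsto u(va)=(uv)a$; the uniqueness clause of Theorem \ref{thm1} then identifies $\widehat{\sigma\circ\tau}$ with $(uv)_{|\text{Im }g}$, so $P(\sigma\circ\tau)=P_{uv}$. The computation in the proof of Lemma \ref{lemG1} gives $\eta_{uv}=\eta_u\eta_v$ and hence $P_{uv}=P_vP_u=P(\sigma)P(\tau)$. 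For the identity $1_{H(e;-)}$ one has $\widehat{1_{H(e;-)}}$ equal to the identity on $\text{Im }e$, which extends to $e\in eSe$; since $\eta_e$ is the identity map of $\pi_e$ this gives $P(1_{H(e;-)})=P_e=1_{\bar{\pi_e}}$. Therefore $P$ is a covariant functor.

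I expect the main obstacle to be keeping the variance consistent: the assignment $\sigma\mapsto\hat\sigma$ of Theorem \ref{thm1} reverses arrows, and the passage $\hat\sigma\mapsto\eta_{\hat\sigma}$ reverses them once more, so $P$ is covariant only because the two reversals cancel; a single bookkeeping slip in the order of composition -- whether in the vertical composite of natural transformations, in the product $uv$ in $T_X$, or in $\eta_u\eta_v$ versus $\eta_v\eta_u$ -- would break either covariance or the composition identity. One must therefore apply Lemma \ref{lem4} carefully, as it is precisely its explicit description of the components that converts the abstract composite $\sigma\circ\tau$ into the concrete product $uv$ on which the argument hinges.
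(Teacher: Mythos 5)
Your proof is correct and follows essentially the same route as the paper's: well-definedness on objects, well-definedness on morphisms via the uniqueness of $\hat{\sigma}$ in Theorem \ref{thm1} and Lemma \ref{lem4}, and functoriality via the computation $\eta_{uv}=\eta_u\eta_v$, i.e.\ $P_{uv}=P_vP_u$. The only variation is that for the implication $H(e;-)=H(f;-)\Rightarrow \pi_e=\pi_f$ you argue directly from the explicit description in Lemma \ref{lem2} (an extremal argument on the coarsest partition occurring), where the paper invokes Proposition \ref{proH}; both work, and you additionally verify preservation of identities, which the paper leaves implicit.
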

\begin{proof}
If $H(e;-) = H({f} ;-)$, by Proposition \ref{proH} there exists a unique $h : \text{Im }f \to \text{Im }e $ such that $e = {f} \ast h $ .i.e $e = fu$ where $u = h^\circ$ and hence $\pi_{e} \supseteq \pi_{f} $ (by lemma \ref{tx}). Similarly there is $h^{-1} : \text{Im }e \to \text{Im }f $ ( $h$ being an isomorphism can be inverted ) such that ${f} = {e} \ast h^{-1} $. So $\pi_{f} \supseteq \pi_{e} $. Hence $\pi_{e} = \pi_{f} $. And so $\bar{\pi_e} = \bar{\pi_f}$. And $vP $ is well defined.\\
Now by lemma \ref{lem4}, $\sigma$ is determined by a unique $\hat{\sigma}: \text{Im }f \to \text{Im }e $, say $v$. This makes $P_v$ unique and the map $P$ is well defined on the morphisms as well. \\
Let $\sigma : H(e;-) \to H({f} ;-)$ and $\tau : H({f};-) \to H({g} ;-)$; then\\
$P(\sigma)P(\tau) = P_{v}P_u$(where $ v $ and $ u$ are chosen as discussed above) $ = P_{uv}$ (since $(\alpha)P_vP_{u} = ({v}\alpha)P_{u} = {u}\:{v}\alpha = {uv}\alpha = (\alpha)P_{uv} $).\\
Also $P(\sigma\circ\tau) = P_{uv}$ (By lemma \ref{lem4}, $(a)\sigma\circ\tau = ({va})\tau = {uva}$ ).\\
So $P(\sigma\circ\tau) = P(\sigma)P(\tau)$ and hence $P$ is a covariant functor. 
\end{proof}
Now define a functor $Q : \Pi(X) \to N^\ast \mathscr{P}(  X )$ as
\begin{equation} \label{eqnQ}
vQ (\bar{\pi}) = H(e;-) \quad\text{and}  \quad
Q(P_\eta) = \sigma
\end{equation}
where $e$ is the idempotent mapping such that $\pi_e = \pi$, ${\eta}$ is a function from $\pi_f$ to $\pi_e$ which uniquely determines a function $v$ from Im $f$ to Im $e$ and $\sigma: H(e;-) \to H({f} ;-) : a \mapsto {va} $. \\
\begin{lem} \label{lemQ1}
$Q$ as defined in equation \ref{eqnQ} is a well defined covariant functor from $\Pi(X)$ to $N^\ast \mathscr{P}( X )$.
\end{lem}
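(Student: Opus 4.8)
The plan is to verify the four conditions that make $Q$ a well-defined covariant functor, essentially mirroring the structure of the proof of Lemma \ref{lemP1} but in the reverse direction. First I would check that $vQ$ is well defined on objects: if $\bar{\pi_1} = \bar{\pi_2}$ as vertices of $\Pi(X)$, then $\pi_1 = \pi_2$ as partitions, so any idempotents $e, e' \in E(T_X)$ with $\pi_e = \pi_1$ and $\pi_{e'} = \pi_2$ satisfy $\pi_e = \pi_{e'}$; by Lemma \ref{lem2} and Lemma \ref{lem3} (as noted in the paragraph after Lemma \ref{lem3}, the $H$ functor $H(e;-)$ is completely determined by $\pi_e$), we get $H(e;-) = H(e';-)$, so the choice of idempotent representative is immaterial.

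Next I would check that $Q$ is well defined on morphisms. Given $P_\eta : \bar{\pi_1} \to \bar{\pi_2}$, the map $\eta : \pi_f \to \pi_e$ (writing $\pi_1 = \pi_e$, $\pi_2 = \pi_f$) determines, via Remark \ref{rmkpi} and the fixed cross-sections $\operatorname{Im} f$ and $\operatorname{Im} e$, a unique induced map $v \in fSe$ with $\eta = \eta_v$; this $v$ is exactly the $\hat\sigma$ of Theorem \ref{thm1}, so by Lemma \ref{lem4} the natural transformation $\sigma : H(e;-) \to H(f;-)$ given componentwise by $a \mapsto va$ is the unique morphism in $N^\ast\mathscr{P}(X)$ with $\hat\sigma = v$. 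One must also confirm that if a different idempotent representative is chosen (changing $e$ to $e'$ with $\pi_{e'} = \pi_e$, hence $e' \mathscr{R} e$), the resulting $v'$ differs from $v$ only in a way covered by Proposition \ref{pro1}, so that $\sigma$ is unchanged; this is the analogue of the $\mathscr{R}$-class bookkeeping already used implicitly via Lemmas \ref{lemv1}, \ref{lemv2}.

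Then I would verify functoriality. For composable $P_\eta : \bar{\pi_e} \to \bar{\pi_f}$ and $P_\theta : \bar{\pi_f} \to \bar{\pi_g}$ in $\Pi(X)$, recall from the definition of composition in $\Pi(X)$ that $P_\eta P_\theta = P_{\theta\eta}$ (composition of the underlying maps on partitions), and the induced transformation map is $v \in fSe$ with $\eta = \eta_v$ and $u \in gSf$ with $\theta = \eta_u$; the composite induced map for $\theta\eta$ is $uv$ (as computed in the proof of Lemma \ref{lemG1}, $\eta_{uv} = \eta_u\eta_v$). On the other side, $Q(P_\eta)\circ Q(P_\theta)$ is the composite of the natural transformations $a \mapsto va$ and $b \mapsto ub$, which by Lemma \ref{lem4} is $a \mapsto uva$, i.e. the natural transformation with $\widehat{\,\cdot\,} = uv$. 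Hence $Q(P_\eta \circ P_\theta) = Q(P_\eta)\circ Q(P_\theta)$. Finally $Q$ sends the identity $1_{\bar{\pi_e}} = P_{1_{\pi_e}}$ to the transformation induced by $e$ itself (since $\eta_e$ is the identity on $\pi_e$ when $e = f$), which is the identity natural transformation on $H(e;-)$.

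I expect the main obstacle to be purely bookkeeping rather than conceptual: carefully tracking the contravariance built into the construction (a morphism $P_\eta : \bar{\pi_e} \to \bar{\pi_f}$ in $\Pi(X)$ corresponds to $\eta : \pi_f \to \pi_e$ going the other way, which in turn corresponds to $v \in fSe$, and then $\sigma : H(e;-) \to H(f;-)$), so that all the arrows compose in the correct order and the identification $\eta_{uv} = \eta_u \eta_v$ is applied with the indices in the right slots. Keeping the conventions of Remark \ref{rmkpi} and Lemma \ref{lem4} fixed throughout makes each individual check routine.
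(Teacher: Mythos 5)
Your proposal is correct and follows essentially the same route as the paper: well-definedness on objects via the fact that $H(e;-)$ is determined by $\pi_e$ (the paper gets this through Lemma \ref{tx} and Proposition \ref{proH}), and well-definedness on morphisms via the correspondence $\eta \leftrightarrow v$ of Remark \ref{rmkpi} together with Lemmas \ref{lemv1} and \ref{lem4}. You go slightly further than the paper by also checking preservation of composition and identities (the paper's proof only verifies well-definedness and leaves functoriality implicit), and your bookkeeping of $\eta_{uv}=\eta_u\eta_v$ and $a\mapsto uva$ is consistent with the computations in Lemmas \ref{lemG1} and \ref{lemP1}.
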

\begin{proof}
Suppose $\bar{\pi_1} = \bar{\pi_2}$ then $\pi_1 = \pi_2$. Suppose $e$ and $f$ are representative mappings of $\pi_1$ and $\pi_2$ respectively , then by lemma \ref{tx}, $e \mathscr{R} f$. By proposition \ref{proH} we have $H(e;-) = H(f;-)$. Hence $vQ$ is well defined.\\
Now suppose $P_\eta = P_\theta$. Let $v$ and $u$ be chosen as in the equation \ref{eqnQ} so that $P_\eta $ and $P_\theta$ determines $v$ and $u$ respectively; and let $\sigma$ and $\tau$ be the associated natural transformations. Since $P_\eta = P_\theta$, by lemma \ref{lemv1}, we have ${v} = {ue}$. And by lemma \ref{lem4}, $\sigma = \tau$. Hence $Q$ is well-defined on the morphisms as well. 
\end{proof}
\begin{lem}\label{lemP2}
$P$ is inclusion preserving.
\end{lem}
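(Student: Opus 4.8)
The plan is to show that an inclusion morphism of $N^\ast\mathscr{P}(X)$ is sent by $P$ to the canonical inclusion in $\Pi(X)$ between the corresponding vertices. So suppose $\sigma:H(e;-)\to H(f;-)$ is the inclusion $H(e;-)\subseteq H(f;-)$, and let $v=\hat\sigma:\text{Im }f\to\text{Im }e$ be the associated map from Theorem \ref{thm1}, so that by \eqref{eqnP} we have $P(\sigma)=P_v:\bar{\pi_e}\to\bar{\pi_f}$. By the ``moreover'' clause of Theorem \ref{thm1}, $\sigma$ being this inclusion means precisely $e=f\ast(\hat\sigma)^\circ$. Unwinding this through the identification of normal cones in $\mathscr{P}(X)\cong\mathcal{L}(T_X)$ with transformations (exactly as in the proofs of Lemmas \ref{lem2} and \ref{lem3}), the cone $\rho^f\ast(\hat\sigma)^\circ$ is $\rho^{f(\hat\sigma)^\circ}$; and since $(\hat\sigma)^\circ$ and $\hat\sigma=v$ agree as set maps (indeed the equation $e=f(\hat\sigma)^\circ$ forces $\text{Im }v=\text{Im }e$, so $v$ is already surjective), this simply says $e=fv$ in $T_X$, where we apply $f$ first and then $v$.

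From $e=fv$ the remainder is a short computation. First, $\pi_f\subseteq\pi_e$: if $xf=yf$ then $xe=(xf)v=(yf)v=ye$ (equivalently, by Lemma \ref{tx}, $\pi_e\supseteq\pi_f$). By the description of the subobjects of $\Pi(X)$ in Section 2 this gives $\bar{\pi_e}\le\bar{\pi_f}$, with $e\in fSe$ and the inclusion $\bar{\pi_e}\subseteq\bar{\pi_f}$ realised by $P_e$, whose underlying map $\eta_e:\pi_f\to\pi_e$ sends $xf^{-1}\mapsto(xe)e^{-1}$. It therefore suffices to prove $P_v=P_e$, i.e. $\eta_v=\eta_e$. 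For $x\in\text{Im }f$ we have $xf=x$ since $f$ is idempotent, so $e=fv$ yields $xe=(xf)v=xv$; hence $(xv)e^{-1}=(xe)e^{-1}$, that is $(xf^{-1})\eta_v=(xf^{-1})\eta_e$ for every block $xf^{-1}$ of $\pi_f$. Thus $P_v=P_e=j(\bar{\pi_e},\bar{\pi_f})$, and so $P$ carries inclusions to inclusions.

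I expect the only genuine subtlety to be the translation step in the first paragraph: unwinding the abstract identity $e=f\ast(\hat\sigma)^\circ$ of Theorem \ref{thm1} through the cone-to-transformation dictionary and checking that passing to the epimorphic part of $\hat\sigma$ loses nothing (it does not, because $\hat\sigma$ is forced to be onto $\text{Im }e$). Once $e=fv$ is available, the identification of $P_v$ with the canonical inclusion $P_e$ is immediate from the relation $xf=x$ on $\text{Im }f$, and no input beyond Lemma \ref{tx} and the subobject structure of $\Pi(X)$ is required.
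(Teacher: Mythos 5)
Your proof is correct and follows essentially the same route as the paper: invoke the ``moreover'' clause of Theorem \ref{thm1} to get $e=f\hat\sigma$, deduce $\pi_f\subseteq\pi_e$ and hence $\bar{\pi_e}\subseteq\bar{\pi_f}$, and identify $P(\sigma)$ with the inclusion $j(\bar{\pi_e},\bar{\pi_f})$. You are in fact more careful than the paper, which silently conflates $\hat\sigma$ with $e$ and asserts $P(\sigma)=P_e$ without the verification $\eta_v=\eta_e$ that you supply.
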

\begin{proof}
Suppose $\sigma$ be an inclusion in $N^\ast \mathscr{P}(  X )$ from $H(e;-)$ to $ H(f;-)$. By theorem \ref{thm1}, there is a unique epimorphism $e$ such that $e = f * e$. So $e = fe$ and $\pi_f \subseteq \pi_e$ and thus $\bar{\pi_e} \subseteq \bar{\pi_f}$. And $P_e = j(\bar{\pi_e},\bar{\pi_f})$ is an inclusion in $\Pi(X)$ and $P(\sigma) = P_e$ and hence $P$ is inclusion preserving. 
\end{proof}
\begin{lem} \label{lemP3}
$vP$ is an order isomorphism.
\end{lem}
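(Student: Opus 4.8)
The plan is to mirror the proof of Lemma~\ref{lemG2A}: show that $vP$ both preserves and reflects the subobject order, i.e. that $H(e;-)\subseteq H(f;-)$ in $N^\ast\mathscr{P}(X)$ if and only if $\bar{\pi_e}\leq\bar{\pi_f}$ in $\Pi(X)$. Bijectivity of $vP$ on objects is easy and can be noted in passing: $vP$ is $v$-surjective because every non-identity partition of $X$ equals $\pi_e$ for some idempotent $e\in E(T_X)$, and $v$-injective because $\bar{\pi_e}=\bar{\pi_f}$ forces $\pi_e=\pi_f$, hence $e\,\mathscr{R}\,f$, hence $H(e;-)=H(f;-)$ by Proposition~\ref{proH}. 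So the work is entirely in the order equivalence.

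For the forward direction, suppose $H(e;-)\subseteq H(f;-)$. The inclusion natural transformation is, by Theorem~\ref{thm1}, determined by an epimorphism $\hat\sigma$ with $e=f\ast(\hat\sigma)^\circ$; as already observed in the proof of Lemma~\ref{lemP2}, this gives $e=fe$, whence $\pi_f\subseteq\pi_e$ by Lemma~\ref{tx}(2). By the description of the order on $\Pi(X)$ recorded before Lemma~\ref{lrtr}, $\pi_f\subseteq\pi_e$ as equivalence relations is precisely $\bar{\pi_e}\leq\bar{\pi_f}$.

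For the converse, suppose $\bar{\pi_e}\leq\bar{\pi_f}$, i.e. $\pi_f\subseteq\pi_e$. The key step is to produce the inclusion morphism explicitly. First note that $\pi_f\subseteq\pi_e$ forces $fe=e$: for $x\in X$ the point $xf$ lies in the $\pi_f$-class of $x$, hence (since $\pi_f$ refines $\pi_e$) in the $\pi_e$-class of $x$, so $xfe=xe$. Consequently $e=f\cdot e\cdot e\in fSe$, and by Lemma~\ref{lem4} the transformation $v=e$ defines a morphism $\sigma:H(e;-)\to H(f;-)$ whose component at each $C$ sends $a\mapsto ea=a$ (using $\pi_a\supseteq\pi_e$ and Lemma~\ref{tx}). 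Since $e$ is idempotent, $e^\circ=e$ and $f\ast e^\circ$ corresponds to $fe=e$, so Theorem~\ref{thm1} identifies $\sigma$ with the inclusion $H(e;-)\subseteq H(f;-)$. Combining the two directions gives $H(e;-)\subseteq H(f;-)\iff\bar{\pi_e}\leq\bar{\pi_f}$, and together with the bijectivity of $vP$ this says $vP$ is an order isomorphism.

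I expect the converse direction to be the only real obstacle: turning the combinatorial condition $\pi_f\subseteq\pi_e$ into an honest inclusion of the power-set $H$-functors means matching the identity $e=f\ast(\hat\sigma)^\circ$ demanded by Theorem~\ref{thm1}. Once $fe=e$ is in hand this is mechanical, but one must be careful with the identification of normal cones in $\mathscr{P}(X)$ with transformations and with the fact that the epimorphic part of the idempotent $e$ is $e$ itself, so that $f\ast e^\circ$ really is $fe$.
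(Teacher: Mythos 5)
Your proposal is correct and follows essentially the same route as the paper: both translate $H(e;-)\subseteq H(f;-)$ into $e=f\ast(\hat\sigma)^\circ$ via Theorem~\ref{thm1}, then into $e=fe$ and $\pi_f\subseteq\pi_e$ via Lemma~\ref{tx}. The only difference is that the paper compresses everything into a single chain of biconditionals, whereas you explicitly construct the inclusion morphism in the converse direction (via $fe=e$ and $v=e\in fSe$), which is a worthwhile detail the paper leaves implicit.
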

\begin{proof}
Suppose $H(e ;-) \subseteq H({f} ;-)$, by theorem \ref{thm1}, this is true if and only if there exists a unique $\hat{\sigma}$ such that $e = {f} \ast \hat{\sigma}$ $ \iff e = {fu} $ $\iff \pi_{e} \supseteq \pi_{f} \iff \bar{\pi_e} \subseteq \bar{\pi_f}$ .\\ 
Hence $H(e ;-) \subseteq H({f} ;-) \iff \bar{\pi_e} \subseteq \bar{\pi_f}$ and $vP$ is an order isomorphism.
\end{proof}
\begin{thm} \label{thmP}
$N^\ast \mathscr{P}(  X )$ is isomorphic to $\Pi(X)$ as normal categories.
\end{thm}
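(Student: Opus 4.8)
The plan is to exhibit the functor $P$ together with the functor $Q$ of equation \ref{eqnQ} as mutually inverse, and then to combine this with the inclusion- and order-preservation already recorded. By Lemmas \ref{lemP1} and \ref{lemQ1} both $P$ and $Q$ are well-defined covariant functors, so what is left is to verify $Q\circ P = 1_{\Pi(X)}$ and $P\circ Q = 1_{N^\ast\mathscr{P}(X)}$, remembering throughout the identification of a normal cone $\rho^\alpha$ with $\alpha\in T_X$, so that the objects of $N^\ast\mathscr{P}(X)$ are written $H(e;-)$ with $e\in E(T_X)$.

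First I would check $Q\circ P = 1_{\Pi(X)}$. On objects, $Q$ sends $\bar\pi$ to $H(e;-)$ for a chosen idempotent $e$ with $\pi_e = \pi$, and then $P$ sends this back to $\bar{\pi_e}=\bar\pi$, so $vQ\circ vP$ is the identity. On a morphism $P_\eta\colon\bar{\pi_1}\to\bar{\pi_2}$ (with $\eta\colon\pi_2\to\pi_1$), the functor $Q$ produces the natural transformation $\sigma\colon a\mapsto va$, where $v$ is the transformation $\mathrm{Im}\,f\to\mathrm{Im}\,e$ induced by $\eta$ in the sense of Remark \ref{rmkpi} and $\pi_e=\pi_1$, $\pi_f=\pi_2$; then $P(\sigma)=P_{\hat\sigma}=P_v=P_{\eta_v}=P_\eta$, the last equality because $\eta=\eta_v$ by the uniqueness clause of Remark \ref{rmkpi}. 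Hence $Q\circ P=1$.

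Next I would check $P\circ Q = 1_{N^\ast\mathscr{P}(X)}$. On objects, $P$ sends $H(e;-)$ (with $e$ idempotent) to $\bar{\pi_e}$, and $Q$ then sends $\bar{\pi_e}$ to $H(e';-)$ for a chosen idempotent $e'$ with $\pi_{e'}=\pi_e$; since $e,e'\in E(T_X)$ have equal partitions, $e\,\mathscr{R}\,e'$ by Lemma \ref{tx}, whence $H(e';-)=H(e;-)$ by Proposition \ref{proH}, so $vP\circ vQ$ is the identity. On a morphism $\sigma\colon H(e;-)\to H(f;-)$, write $\hat\sigma=v\in fSe$ (Lemma \ref{lem4}); then $P(\sigma)=P_v$ and $Q(P_v)$ is the natural transformation $\tau\colon a\mapsto v'a$, where $v'\in f'Se'$ is induced from $\eta_v$ relative to the chosen idempotent representatives $e',f'$ of $\pi_e,\pi_f$, so that $\eta_{v'}=\eta_v$. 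Applying Lemma \ref{lemv1} to $\eta_v=\eta_{v'}$ gives $\pi_e=\pi_{e'}$, $\pi_f=\pi_{f'}$ and $v=v'e$; consequently $e\,\mathscr{R}\,e'$ and $f\,\mathscr{R}\,f'$, so $\sigma$ and $\tau$ have the same source and target by Proposition \ref{proH}, and for $a\in H(e;C)$ we have $\pi_a\supseteq\pi_e$, hence $ea=a$ by Lemma \ref{tx} and $va=v'(ea)=v'a$. Thus $\tau=\sigma$ componentwise, so $P\circ Q=1$. It now follows that $P$ is $v$-injective, $v$-surjective, full and faithful; together with Lemma \ref{lemP2} ($P$ is inclusion preserving) and Lemma \ref{lemP3} ($vP$ is an order isomorphism), $P$ is an isomorphism of normal categories, which is the theorem.

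The hard part will be the morphism-level verification of $P\circ Q=1$: one must follow a natural transformation $\sigma$ through its associated datum $\hat\sigma=v$, then through $\eta_v$, then through the newly induced $v'$, and finally back to $\tau$, while the distinguished idempotent representatives of the source and target partitions are allowed to change along the way, and then argue that $\tau$ is again $\sigma$. The feature that makes this go through is that a morphism of the $H$-functor category of $T_X$ absorbs on the left any idempotent having the matching partition (Lemma \ref{tx}), together with the rigidity statement of Lemma \ref{lemv1}; everything else is bookkeeping resting on Lemma \ref{lem4} for the explicit form of the morphisms of $N^\ast\mathscr{P}(X)$ and on Remark \ref{rmkpi} for the correspondence between maps of partitions and their induced transformations.
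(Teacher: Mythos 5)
Your proof takes essentially the same route as the paper: exhibit $P$ and $Q$ as mutually inverse functors and combine this with Lemma \ref{lemP2} (inclusion preservation) and Lemma \ref{lemP3} (order isomorphism on objects). The paper merely asserts $PQ=1_{N^\ast\mathscr{P}(X)}$ and $QP=1_{\Pi(X)}$ as ``clear,'' whereas you correctly supply the morphism-level verification via Lemma \ref{lem4}, Lemma \ref{lemv1}, Remark \ref{rmkpi} and the absorption $ea=a$; this is the same argument with the omitted details filled in.
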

\begin{proof}
Clearly $PQ = 1_{N^\ast \mathscr{P}( X)}$ and $QP = 1_{\Pi(X)}$. And using the previous lemmas \ref{lemP2}, \ref{lemP3}, $P$ is a category order isomorphism which preserves inclusions.
Hence the theorem.
\end{proof}

\end{document}